\newtheorem{theorem}{Theorem}
\newtheorem{lemma}[theorem]{Lemma}
\newtheorem{proposition}[theorem]{Proposition}
\newtheorem{corollary}[theorem]{Corollary}
\newtheorem{claim}{Claim}
\begin{document}
\vskip 1cm
\begin{center}
\vskip 0.1cm {\bf\Large On existence and concentration of solutions to a class of quasilinear problems involving the $1-$Laplace operator}
\end{center}
\vskip 0.3cm
\begin{center}

{\sc Claudianor O. Alves$^1$ and Marcos T. O. Pimenta$^{2,*}$,}
\\
\vspace{0.5cm}

1. Unidade Acad\^emica de Matem\'atica \\ Universidade Federal de Campina Grande \\
58429-900 - Campina Grande - PB , Brazil \\

2. Departamento de Matem\'atica e Computa\c{c}\~ao\\ Universidade Estadual Paulista (Unesp), Faculdade de Ci\^encias e Tecnologia\\
19060-900 - Presidente Prudente - SP, Brazil, \\
* corresponding author
\medskip

E-mail addresses: coalves@dme.ufcg.edu.br, pimenta@fct.unesp.br

 \end{center}
\vskip 1cm

\begin{abstract}
In this work we use variational methods to prove results on existence and concentration of solutions to a problem in $\mathbb{R}^N$ involving the $1-$Laplacian operator. A thorough analysis on the energy functional defined in the space of functions of bounded variation $BV(\mathbb{R}^N)$ is necessary, where the lack of compactness is overcome by using the Concentration of Compactness Principle of Lions.

\end{abstract}

\vskip 1.5cm

\noindent{{\bf Key Words:} bounded variation solutions, 1-Laplacian operator, concentration of solutions.}
\newline
\noindent{{\bf AMS Classification:} 35J62, 35J20.} \vskip 0.4cm

\section{Introduction and some abstract results}
Several recent studies have focused on the nonlinear Schr\"{o}dinger
equation
$$
i\epsilon\displaystyle \frac{\partial \Psi}{\partial t}=-\epsilon^{2}\Delta
\Psi+(V(z)+E)\Psi-f(\Psi)\,\,\, \mbox{for all}\,\,\, z \in
\mathbb{R}^{N},\eqno{(NLS)}
$$
where $N \geq 2$, $\epsilon > 0$ is a positive parameter and $V,f$ are continuous function verifying some conditions. This class of equation is one of the main objects of the quantum physics, because it appears in problems involving nonlinear optics, plasma physics and condensed matter physics.

The knowledge of the solutions for the elliptic equation
$$
\ \  \left\{
\begin{array}{l}
- \epsilon^{2} \Delta{u} + V(z)u=f(u)
\ \ \mbox{in} \ \ \mathbb{R}^{N},
\\
u \in H^{1}(\mathbb{R}^{N}),
\end{array}
\right.
\eqno{(S)_{\epsilon}}
$$
has a great importance in the study of standing-wave solutions
of $(NLS)$. The existence and concentration of
positive solutions for general semilinear elliptic equations
$(S)_\epsilon$ for the case $N \geq 3$ have been extensively
studied, see for example, Floer and Weinstein \cite{FW}, Oh
\cite{O2}, Rabinowitz \cite{rabinowitz}, Wang \cite{WX}, Cingolani and Lazzo \cite{CL97},
Ambrosetti, Badiale and Cingolani  \cite{ABC}, Gui \cite{G}, del Pino and Felmer \cite{DF1}  and their references.

In the above mentioned papers, the existence,  multiplicity and concentration of positive solutions have been obtained in connection with the geometry of the function $V$. In \cite{rabinowitz}, by a mountain pass argument,  Rabinowitz proves the existence of positive solutions of $(S)_{\epsilon}$ for $\epsilon > 0$  
small and
$$
\liminf_{|z| \rightarrow \infty} V(z) > \inf_{z \in
	\mathbb{R}^N}V(z)=V_{0} >0.
$$
Later Wang \cite{WX} showed that these solutions concentrate at global minimum points of $V$ as  $\epsilon$ tends to 0. In \cite{DF1}, del Pino and Felmer have found solutions which concentrate around local minimum of $V$ by introducing a penalization method. More precisely, they assume that there is an open and bounded set $\Lambda \subset \mathbb{R}^N$ such that
$$
0< V_{0}\leq \inf_{z\in\Lambda}V(z)< \min_{z \in
	\partial\Lambda}V(z).
$$

Motivated by papers \cite{rabinowitz} and \cite{WX}, let us consider the following class of quasilinear elliptic problems
\begin{equation}
\left\{
\begin{array}{rr}
\displaystyle - \epsilon \Delta_1 u + V(x)\frac{u}{|u|} & = f(u) \quad \mbox{in $\mathbb{R}^N$,}\\
& u \in BV(\mathbb{R}^N),
\end{array} \right.
\label{Pintro}
\end{equation}
where $\epsilon > 0$, $N \geq 2$ and the operator $\Delta_1$ is the well known $1-$Laplacian operator, whose formal definition is given by $\displaystyle \Delta_1 u = \mbox{div}\left(\frac{\nabla u}{|\nabla u|}\right)$. The nonlinearity $f$ is assumed to satisfy the following set of assumptions:
\begin{itemize}
\item [$(f_1)$] $f \in C^1(\mathbb{R})$;
\item [$(f_2)$] $f(s) = o(1)$ as $s \to 0$;
\item [$(f_3)$] There exist constants $c_1, c_2 > 0$ and $p \in [1,1^*)$ such that
$$
|f(s)| \leq c_1 + c_2|s|^{p-1} \quad \forall s \in \mathbb{R}; 
$$
\item [$(f_4)$] There exists $\theta > 1 $ such that $$0 < \theta F(s) \leq f(s)s, \quad \mbox{for $s \neq 0$},$$
where $ \displaystyle F(s) = \int_0^s f(t)dt$;
\item [$(f_5)$] $f$ is increasing.
\end{itemize}
Hereafter, the potential is going to be considered satisfying some of the following conditions:
\begin{itemize}
\item [$(V_1)$] $V \in L^\infty(\mathbb{R}^N)$ and $\displaystyle 0 < V_0:=\inf_{\mathbb{R}^N}V$;
\item [$(V_2)$] $\displaystyle  V_\infty:=\liminf_{|x| \to +\infty}V(x) > V_0$;
\item [$(V_3)$] $V \in C(\mathbb{R}^N)$, $\displaystyle  \lim_{ |x| \to +\infty} V(x) = V_\infty$ and $V \leq V_\infty$ in $\mathbb{R}^N$.
\end{itemize}

Hereafter, we will say that $V$ satisfies the Rabinowitz's condition when $(V_1)-(V_2)$ hold.  

By studying problem (\ref{Pintro}) we are looking to get some results on existence and concentration of solutions, as the parameter $\epsilon \to 0^+$. The approach used as in the laplacian case is variational. However, the right space in which problem (\ref{Pintro}) takes place is the space of functions of bounded variation, $BV(\mathbb{R}^N)$. The energy function associated to (\ref{Pintro}) is $I_\epsilon: BV(\mathbb{R}^N) \to \mathbb{R}$, defined by
$$
I_\epsilon(u) = \epsilon\int_{\mathbb{R}^N}|Du| + \int_{\mathbb{R}^N}V(x)|u|dx - \int_{\mathbb{R}^N}F(u)dx,
$$
where $|Du|$ is the total variation of the vectorial Radon measure $Du$ (see Section 2).

Before we state our main results, we would like to mention the main difficulties in dealing with (\ref{Pintro}), which are organized in the list bellow:
\begin{itemize}
\item Problem (\ref{Pintro}) is just a formal version of the correct Euler-Lagrange equation associated to the functional $I_\epsilon$, since it is not well defined wherever $\nabla u$ or $u$ vanishes. The correct one, i.e., the equation satisfied by the critical points of $I_\epsilon$ is given by
$$
\left\{
\begin{array}{l}
\exists z \in L^\infty(\mathbb{R}^N,\mathbb{R}^N), \, \, |z|_\infty \leq 1,\, \,  \mbox{div}z \in L^N(\mathbb{R}^N), \, \, -\int_{\mathbb{R}^N}u \mbox{div}z dx = \int_{\mathbb{R}^N}|Du|,\\
\exists z_2^* \in L^N(\mathbb{R}^N),\, \, z_2^*V(x)|u| = u \quad \mbox{a.e. in $\mathbb{R}^N$},\\
-\epsilon \mbox{div} z + z_2^* = f(u), \quad \mbox{a.e. in $\mathbb{R}^N$},
\end{array}
\right.
$$
and is going to be obtained in Section 2.1; \item The functional $I_\epsilon$ is not $C^1(BV(\mathbb{R}^N))$ and then some other sense of critical point have to be considered. Since $I_\epsilon$ is written like the difference between a convex locally Lipschitz functional and a smooth one, the theory of sub-differential of Clarke (see \cite{Clarke,Chang}) can be applied. Following this theory, it is possible to define a sense of critical point, Palais-Smale sequence, etc., that provide us with the tools to carry a variational approach to (\ref{Pintro});
\item The space $BV(\mathbb{R}^N)$, the domain of $I_\epsilon$, is not reflexive neither uniformly convex. This is the reason why is so difficult to prove that the functionals defined in this space satisfy compactness conditions like the Palais-Smale one;
\item The solutions we will get lacks smoothness, then a lot of arguments explored in the literature cannot be used here, like convergence in the sense $C_{loc}^{2}(\mathbb{R}^N)$, $C_{loc}^{1}(\mathbb{R}^N)$, etc.

\item To overcome the above difficulties we have used in Section 3 the Concentration of Compactness Principle due to Lions, which is in our opinion an important novelty in the study of concentration of solution. Here, we must observe that our approach  can also be used for concentration problem involving the laplacian operator.  

\item Here to get a solution we must prove that if $(v_n)$ is a Palais-Smale sequence associated with the energy functional we must have
$$
\int_{\mathbb{R}^N}f(v_n)v_n dx \to \int_{\mathbb{R}^N}f(v)v dx.
$$ 
For a lot of problems involving the Laplacian the above limit is not necessary to get a nontrivial solution, however for our problem this limit is crucial.

\end{itemize}

Our main results are the following theorems.

\begin{theorem}
Suppose that $f$ satisfies  the conditions $(f_1) - (f_5)$ and that $V$ satisfies $(V_1)$ and $(V_2)$. Then there exist $\epsilon_0 > 0$ such that (\ref{Pintro}) has a nontrivial bounded variation solution $u_\epsilon$, for all $0 < \epsilon < \epsilon_0$. Moreover, for each sequence $\epsilon_n \to 0$, up to a subsequence, the family $(u_{\epsilon_n})_{n \in \mathbb{N}}$ concentrate around a point $x_0 \in \mathbb{R}^N$ such that $V(x_0) = V_0$. More specifically, there exists $C > 0$ such that for all $\delta > 0$, there exist $\overline{R} > 0$ and $n_0 \in \mathbb{N}$ such that, \begin{equation}
\int_{B^c_{\epsilon_n\overline{R}}(x_0)}f(u_n)u_ndx < \epsilon_n^N\delta \quad \mbox{and} \quad \int_{B_{\epsilon_n\overline{R}}(x_0)}f(u_n)u_ndx \geq C\epsilon_n^N,
\label{concentrationf}
\end{equation}
for all $n \geq n_0$.
\label{theoremapplication1}
\end{theorem}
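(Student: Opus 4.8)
The plan is to rescale the problem so that the parameter $\epsilon$ disappears from the leading term. Setting $v(y) = u(\epsilon y)$, the change of variables turns $I_\epsilon$ into a functional of the form $J_\epsilon(v) = \int |Dv| + \int V(\epsilon y)|v|\,dy - \int F(v)\,dy$, up to the factor $\epsilon^N$ coming from $dx = \epsilon^N dy$; the concentration estimates \eqref{concentrationf}, after undoing the scaling, amount to saying that the rescaled solutions $v_n$ stay essentially supported in a fixed ball $B_{\overline R}(y_0)$ and carry a definite amount of the quantity $\int f(v_n)v_n$. So the theorem reduces to: (i) producing for small $\epsilon$ a nontrivial critical point $v_\epsilon$ of $J_\epsilon$ via the mountain pass theorem in the Clarke sense (using $(f_1)$--$(f_5)$ and $(V_1)$ to get the geometry and the needed boundedness, and the abstract nonsmooth critical point theory sketched in the introduction), with mountain-pass level bounded above by the level $c_{V_0}$ of the autonomous limit functional with potential $\equiv V_0$; (ii) showing that, along any sequence $\epsilon_n\to 0$, the $v_n$ do not vanish and do not split, and concentrate at a point where $V$ attains $V_0$.

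The core of the argument is step (ii), and this is where I would invoke Lions' Concentration-Compactness Principle applied to the sequence of measures $\mu_n = |Dv_n| + V(\epsilon_n y)|v_n|\,dy$ (or perhaps more conveniently to $\rho_n\,dy$ where $\rho_n$ is tied to $f(v_n)v_n$, so that \eqref{concentrationf} falls out directly). One first rules out \emph{vanishing}: if $\sup_{y}\int_{B_1(y)}\rho_n\,dy\to 0$, a vanishing lemma in $BV$ would force $\int F(v_n)\to 0$ and $\int f(v_n)v_n\to 0$, hence $J_{\epsilon_n}(v_n)\to 0$, contradicting that the mountain-pass level is bounded below by a positive constant (a consequence of $(f_2)$, $(f_3)$ and $(V_1)$ giving a uniform "first eigenvalue"-type lower bound). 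Then one rules out \emph{dichotomy}: a splitting of $v_n$ into two pieces drifting apart would, because the limit potential near infinity is $\ge V_0$ and the autonomous functional is strictly subadditive in the level (i.e. $c_{V_0}$ cannot be written as a sum of two positive critical levels — this uses $(f_4)$, $(f_5)$ and the scaling of the $1$-Laplacian), contradict the upper bound $c_{\epsilon_n}\le c_{V_0}+o(1)$. Hence \emph{compactness} holds: there are points $y_n$ with $\int_{B_R(y_n)}\rho_n\,dy\ge$ const for $R$ large, uniformly in $n$, and the tails $\int_{B_R^c(y_n)}\rho_n\,dy\to 0$ — which is exactly \eqref{concentrationf} once we set $x_0=\lim \epsilon_n y_n$ (after translating so that $y_n$ is bounded, which is itself forced by $(V_2)$: if $|\epsilon_n y_n|\to\infty$ the energy would see the larger potential $V_\infty>V_0$ and violate the upper bound).

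A subtlety specific to the $1$-Laplacian, flagged in the introduction, is that one genuinely needs $\int f(v_n)v_n\to\int f(v)v$ along the convergent subsequence, not merely weak-$*$ convergence in $BV$; the lack of reflexivity and the failure of $I_\epsilon$ to be $C^1$ mean the usual Brezis--Lieb / weak-continuity shortcuts are unavailable. I would obtain this by combining the tightness coming from concentration-compactness (no mass escapes to infinity) with the $(f_3)$ growth bound and the strong $L^p_{loc}$ convergence that $BV\hookrightarrow L^1_{loc}$ plus the a priori bound provide, upgrading to global $L^p$ convergence via the uniform tail decay; once this limit is in hand, passing to the limit in the Clarke-critical-point condition identifies the limit of $v_n$ as a nontrivial critical point of the autonomous functional $J_{V_0}$, and its energy equals $c_{V_0}$, closing the loop. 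I expect the main obstacle to be precisely making dichotomy impossible while respecting the peculiarities of the total-variation term — one must check that the total variation splits additively in the limit along a dichotomy (an argument with cut-off functions whose $BV$-norms are controlled, exploiting that $|D(\varphi v)|\le \varphi|Dv| + |v||\nabla\varphi|\,dx$), and that the resulting "pieces at infinity" are bona fide subsolutions of the limit problem whose energies are each bounded below by a universal positive constant, so that two of them cannot coexist under the ceiling $c_{V_0}$.
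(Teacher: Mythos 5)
Your overall architecture coincides with the paper's: rescale to $v(y)=u(\epsilon y)$, run nonsmooth (Clarke/Szulkin) mountain-pass theory for the rescaled functional, apply Lions' concentration-compactness to exclude vanishing and dichotomy, upgrade local to global $L^q$ convergence via the resulting tightness precisely so that $\int f(v_n)v_n\,dx\to\int f(v_\epsilon)v_\epsilon\,dx$, pass to the limit in the subdifferential inequality, and locate the concentration point by showing $\epsilon_n y_n$ stays bounded and converges to a minimum of $V$. The paper applies the principle to $\rho_n=|v_n|/|v_n|_1$ rather than to an energy density, but that is cosmetic; you also correctly isolate the 1-Laplacian-specific difficulty (lack of reflexivity, need for strong convergence of $f(v_n)v_n$).

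The one step whose stated justification would not survive scrutiny is the exclusion of dichotomy. You attribute it to ``the limit potential near infinity is $\ge V_0$'' together with strict subadditivity of $c_{V_0}$ ``using the scaling of the $1$-Laplacian''. Neither ingredient is the operative one: the functional has no useful scaling invariance, strict subadditivity of mountain-pass levels is neither established nor needed, and $V\ge V_0$ at infinity is too weak --- if $V_\infty=V_0$ the mechanism would have to fail, since mass could then escape at no energy cost. What actually kills dichotomy (and, under compactness, the escape of the centers $y_n$ to infinity) is the strict gap coming from $(V_2)$: one first proves $c_\epsilon\to c_0$ as $\epsilon\to 0$ and $c_0<c_\infty$, hence $c_\epsilon<c_\infty$ for $\epsilon<\epsilon_0$; then a single nontrivial piece drifting to infinity yields, after translation, a limit $\tilde v\neq 0$ with $\Phi_\infty'(\tilde v)\tilde v\le 0$, which after projection onto $\mathcal{N}_\infty$ and Fatou's lemma (using $(f_4)$, $(f_5)$) forces $c_\epsilon\ge c_\infty$, a contradiction; no two-piece energy count is required. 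Note also that your upper bound $c_{\epsilon_n}\le c_{V_0}+o(1)$ is itself a lemma requiring proof: the paper tests with cutoffs $\psi(\epsilon_n x)w_0(x)$ of a ground state $w_0$ of the autonomous functional, whose existence must first be established (the paper devotes its final section to this), and it is this limit combined with $c_0<c_\infty$ --- not an upper bound by $c_{V_0}$ alone --- that closes every case. With these two points repaired, your plan reproduces the paper's proof.
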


Our second result shows the existence of solution for all $\epsilon >0$ when $V$ is asymptotically linear and it has the following statement.

\begin{theorem} 
Suppose that $f$ satisfies  the conditions $(f_1) - (f_5)$ and also $(V_1)$ and $(V_3)$, then there exist a nontrivial bounded variation solution $u_\epsilon$ of (\ref{Pintro}) for all $\epsilon > 0$.
\label{theoremapplication2}
\end{theorem}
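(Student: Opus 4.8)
The plan is to fix $\epsilon>0$ and work with the functional $I_\epsilon$ on $BV(\mathbb{R}^N)$, using the mountain pass geometry guaranteed by $(f_1)$--$(f_4)$ together with $(V_1)$. Since $V\le V_\infty$ by $(V_3)$, the energy $I_\epsilon$ lies below the ``problem at infinity'' functional $I_\epsilon^\infty$ obtained by replacing $V$ with the constant $V_\infty$; consequently the mountain pass level $c_\epsilon$ satisfies $c_\epsilon\le c_\epsilon^\infty$. The first step is to record this level comparison and, crucially, to establish the \emph{strict} inequality $c_\epsilon<c_\epsilon^\infty$ whenever $V\not\equiv V_\infty$, by testing the mountain pass characterization on a suitably rescaled ground state of the limit problem (where $V<V_\infty$ on a set of positive measure, so translating a test path that is concentrated there produces a strictly lower energy). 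In the borderline case $V\equiv V_\infty$ the problem is translation invariant and one argues directly as in the periodic/autonomous setting. This strict gap is what replaces the Rabinowitz condition $(V_2)$ used in Theorem \ref{theoremapplication1}.

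Next I would take a Palais--Smale sequence $(v_n)$ at level $c_\epsilon$ in the Clarke sense described in Section 2.1. Using $(f_4)$ one gets the standard bound: $c_\epsilon + o(1)\|v_n\| \ge I_\epsilon(v_n) - \tfrac1\theta \langle I_\epsilon'(v_n), v_n\rangle \ge (1-\tfrac1\theta)\big(\epsilon\int|Dv_n| + \int V(x)|v_n|\big)$, so $(v_n)$ is bounded in $BV(\mathbb{R}^N)$. Passing to a subsequence, $v_n \rightharpoonup v$ weakly-$*$ in $BV$ and $v_n\to v$ in $L^1_{loc}$ and a.e.; by $(f_3)$ with $p\in[1,1^*)$ and the compact embedding $BV\hookrightarrow L^q_{loc}$ for $q<1^*$, the limit $v$ is a critical point of $I_\epsilon$. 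The real work is to rule out $v\equiv 0$, i.e.\ to recover compactness. This is exactly the point flagged in the introduction: one must show $\int f(v_n)v_n\,dx \to \int f(v)v\,dx$.

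The main obstacle, as in Theorem \ref{theoremapplication1}, is this loss-of-mass analysis, and I would handle it with Lions' Concentration--Compactness Principle applied to the measures $\mu_n := \epsilon|Dv_n| + V(x)|v_n|\,dx$ (or a closely related tight family built from $f(v_n)v_n$). Vanishing is excluded because it would force $\int F(v_n)\to 0$ and hence $c_\epsilon=0$, contradicting the mountain pass level being positive. Dichotomy is the delicate alternative: if mass splits, the ``escaping'' piece carries at least the limit energy $c_\epsilon^\infty$ (using $(V_3)$: far out, $V$ is close to $V_\infty$), while the retained piece carries a nonnegative energy, forcing $c_\epsilon\ge c_\epsilon^\infty$ and contradicting the strict inequality from Step 1. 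Here the non-reflexivity of $BV$ means one cannot invoke weak convergence of gradients in the usual way; instead one uses lower semicontinuity of the total variation under $L^1_{loc}$ convergence together with the splitting estimate for $\int|Dv_n|$ across disjoint supports, and the $L^1$-type Brezis--Lieb lemma for the $V(x)|v_n|$ and $F(v_n)$ terms (the latter justified by $(f_3)$ and the $BV$ bound). Once dichotomy and vanishing are excluded, compactness holds, $v_n\to v$ strongly enough that $v$ is a nontrivial critical point of $I_\epsilon$ at level $c_\epsilon$, and $(f_5)$ (monotonicity) is used along the way to control the convex-duality pair $z_2^*$ and to pass to the limit in the equation of Section 2.1, yielding the desired bounded variation solution $u_\epsilon$ for every $\epsilon>0$.
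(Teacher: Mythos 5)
Your overall strategy is sound and, from the concentration--compactness step onward, essentially coincides with the paper's: bounded Palais--Smale sequence via $(f_4)$, Lions' principle, vanishing excluded because $c_\epsilon>0$, dichotomy excluded by comparison with the problem at infinity, and then passage to the limit in the variational inequality of Section 2.1. The genuine difference is in how the comparison with $c_\epsilon^\infty$ is set up. You propose to prove the \emph{strict} inequality $c_\epsilon<c_\epsilon^\infty$ whenever $V\not\equiv V_\infty$, by translating the ground state $w_\infty$ of the limit problem so that it overlaps the open set $\{V<V_\infty\}$ and projecting onto the Nehari manifold; this is a legitimate argument (the translation is needed because $w_\infty$ could a priori vanish on $\{V<V_\infty\}$), and it reduces everything to a single case plus the autonomous case. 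The paper instead never proves strictness: it splits into $c=c_\infty$ and $c<c_\infty$. In the equality case it shows, using $(f_5)$ and the Nehari characterization, that $w_\infty$ itself minimizes $\Phi$ on $\mathcal{N}$ and hence is already a solution of the original problem -- no translation lemma and no concentration--compactness are needed there. Your route buys a cleaner single-case structure at the price of an extra lemma; the paper's route is more economical and also covers the degenerate situation where $V=V_\infty$ on the support of $w_\infty$ without singling out $V\equiv V_\infty$.

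One caveat on ordering: you assert that the local limit $v$ ``is a critical point of $I_\epsilon$'' immediately after extracting the $L^1_{loc}$ limit. In this non-smooth $BV$ framework that cannot be concluded at that stage -- the defining inequality for a critical point involves $\int_{\mathbb{R}^N}f(v_n)(w-v_n)\,dx$ over all of $\mathbb{R}^N$, and passing to the limit there requires precisely the global convergence $\int_{\mathbb{R}^N}f(v_n)v_n\,dx\to\int_{\mathbb{R}^N}f(v)v\,dx$ that the concentration--compactness analysis is designed to deliver (the paper flags this in the introduction as the crucial point distinguishing this problem from the Laplacian case). Since you identify this convergence as ``the real work'' in the very next sentence, this is a misstatement of order rather than a gap, but in a final write-up the criticality of $v$ should only be claimed after compactness is established.
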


Here, we would like point out that a version of Theorem \ref{theoremapplication2} for Laplacian operator was proved by Jianfu and Xiping \cite{JX}. 

Before concluding this section, we would like to mention some paper involving the $\Delta_1$ on bounded domain, where the reader can find more references about this subject. In \cite{DegiovanniMagrone}, Degiovanni and Magrone study the version of Br\'ezis-Nirenberg problem to the 1-Laplacian operator, corresponding to 
$$
\left\{
\begin{array}{rl}
\displaystyle - \Delta_1 u & = \displaystyle  \lambda \frac{u}{|u|} + |u|^{1^*-2}u \quad \mbox{in
$\Omega$,}\\
u & = 0 \quad \mbox{on
$\partial\Omega$.}\\
\end{array} \right.
$$
In \cite{Chang2}, Chang uses this approach to study the spectrum of the $1-$Laplacian operator, proving the existence of a sequence of eigenvalues. In \cite{Kawohl}, Kawohl and Schuricht also study the spectrum of the $1-$Laplacian operator and reach the astonishing conclusion that an eigenfunction of this operator, in general satisfies infinity many Euler-Lagrange equations associated with it.

The paper is organized as follows. In Section 2 we give a brief overview about the space $BV(\mathbb{R}^N)$, define the sense of solution we are going to deal with and also find the precise Euler-Lagrange equation associated to $I_\epsilon$. In Section 3 we prove Theorem \ref{theoremapplication1}, studying separately the arguments on existence and concentration of the solutions. In Section 4 we prove Theorem \ref{theoremapplication2}. Finally, in the last section we prove the existence of a ground-state solution to the autonomous problem.

\section{Preliminary results}

First of all let us note that the problem (\ref{Pintro}), through the change of variable $v(x) = u(\epsilon x)$, is equivalent to the problem
\begin{equation}
\left\{
\begin{array}{rr}
\displaystyle - \Delta_1 v + V(\epsilon x)\frac{v}{|v|} & = f(v) \quad \mbox{in $\mathbb{R}^N$,}\\
& u \in BV(\mathbb{R}^N),
\end{array} \right.
\label{Pintrov}
\end{equation}

Let us introduce the space of functions of bounded variation, $BV(\mathbb{R}^N)$. We say that $u \in BV(\mathbb{R}^N)$, or is a function of bounded variation, if $u \in L^1(\mathbb{R}^N)$, and its distributional derivative $Du$ is a vectorial Radon measure, i.e., 
$$BV(\mathbb{R}^N) = \left\{u \in L^1(\mathbb{R}^N); \, Du \in \mathcal{M}(\mathbb{R}^N,\mathbb{R}^N)\right\}.$$
It can be proved that $u \in BV(\mathbb{R}^N)$ is equivalent to $u \in L^1(\mathbb{R}^N)$ and
$$\int_{\mathbb{R}^N} |Du| := \sup\left\{\int_{\mathbb{R}^N} u \mbox{div}\phi dx; \, \, \phi \in C^1_c(\mathbb{R}^N,\mathbb{R}^N), \, \mbox{s.t.} \, \, |\phi|_\infty \leq 1\right\} < +\infty.$$

The space $BV(\mathbb{R}^N)$ is a Banach space when endowed with the norm
$$\|u\| := \int_{\mathbb{R}^N} |Du| + |u|_1,$$
which is continuously embedded into $L^r(\mathbb{R}^N)$ for all $\displaystyle r \in \left[1,1^*\right]$.

As one can see in \cite{Buttazzo}, the space $BV(\mathbb{R}^N)$ has different convergence and density properties than the usual Sobolev spaces. For example, $C^\infty_0(\mathbb{R}^N)$ is not dense in $BV(\mathbb{R}^N)$ with respect to the strong convergence, since $\overline{C^\infty_0(\mathbb{R}^N)}$ w.r.t. the  $BV(\mathbb{R}^N)$ norm is equal to $W^{1,1}(\mathbb{R}^N)$, a proper subspace of $BV(\mathbb{R}^N)$. This has motivated people to define a weaker sense of convergence in $BV(\mathbb{R}^N)$, called {\it intermediate convergence}. We say that $(u_n) \subset BV(\mathbb{R}^N)$ converge to $u \in BV(\mathbb{R}^N)$ in the sense of the intermediate convergence if 
$$
u_n \to u, \quad \mbox{in $L^1(\mathbb{R}^N)$}
$$
and
$$
\int_{\mathbb{R}^N}|Du_n| \to \int_{\mathbb{R}^N}|Du|,
$$
as $n \to \infty$. Fortunately, with respect to the intermediate convergente, $C^\infty_0(\mathbb{R}^N)$ is dense in $BV(\mathbb{R}^N)$.

For a vectorial Radon measure $\mu \in \mathcal{M}(\mathbb{R}^N,\mathbb{R}^N)$, we denote by $\mu = \mu^a + \mu^s$ the usual decomposition stated in the Radon Nikodyn Theorem, where $\mu^a$ and $\mu^s$ are, respectively, the absolute continuous and the singular parts with respect to the $N-$dimensional Lebesgue measure $\mathcal{L}^N$. We denote by $|\mu|$, the absolute value of $\mu$, the scalar Radon measure defined like in \cite{Buttazzo}[pg. 125]. By $\displaystyle \frac{\mu}{|\mu|}(x)$ we denote the usual Lebesgue derivative of $\mu$ with respect to $|\mu|$, given by
$$\frac{\mu}{|\mu|}(x) = \lim_{r \to 0}\frac{\mu(B_r(x))}{|\mu|(B_r(x))}.$$ 

It can be proved that $\mathcal{J}: BV(\mathbb{R}^N) \to \mathbb{R}$, given by
\begin{equation}
\mathcal{J}(u) = \int_{\mathbb{R}^N} |Du| + \int_{\mathbb{R}^N} |u|dx,
\label{J}
\end{equation}
is a convex functional and Lipschitz continuous in its domain. It is also well know that $\mathcal{J}$ is lower semicontinuous with respect to the $L^r(\mathbb{R}^N)$ topology, for $r \in [1,1^*]$ (see \cite{Giusti} for example). Although non-smooth, the functional $\mathcal{J}$ admits some directional derivatives. More specifically, as is shown in \cite{Anzellotti}, given $u \in BV(\mathbb{R}^N)$, for all $v \in BV(\mathbb{R}^N)$ such that $(Dv)^s$ is absolutely continuous w.r.t. $(Du)^s$ and such that $v$ is equal to $0$ a.e. in the set where $u$ vanishes, it follows that
\begin{equation}
\mathcal{J}'(u)v = \int_{\mathbb{R}^N} \frac{(Du)^a(Dv)^a}{|(Du)^a|}dx + \int_{\mathbb{R}^N} \frac{Du}{|Du|}(x)\frac{Dv}{|Dv|}(x)|(Dv)|^s + \int_{\mathbb{R}^N}\mbox{sgn}(u) v dx,
\label{Jlinha}
\end{equation}
where $\mbox{sgn}(u(x)) = 0$ if $u(x) = 0$ and $\mbox{sgn}(u(x)) = u(x)/|u(x)|$ if $u(x) \neq 0$.
In particular, note that, for all $u \in BV(\mathbb{R}^N)$,
\begin{equation}
\mathcal{J}'(u)u = \mathcal{J}(u).
\label{derivadaJ}
\end{equation}

Let us define in the space $BV(\mathbb{R}^N)$ the following norms,
$$
\|v\|_\epsilon := \int_{\mathbb{R}^N}|Dv| + \int_{\mathbb{R}^N}V(\epsilon x)|v|dx,
$$
$$
\|v\|_\infty := \int_{\mathbb{R}^N}|Dv| + \int_{\mathbb{R}^N}V_\infty|v|dx,
$$
and
$$
\|v\|_0 := \int_{\mathbb{R}^N}|Dv| + \int_{\mathbb{R}^N}V_0|v|dx,
$$
which by $(V_1)$ and, $(V_2)$ or $(V_3)$, are equivalent to the usual norm in $BV(\mathbb{R}^N)$.

Let us define also the functionals $\Phi_\epsilon, \Phi_\infty, \Phi_0: BV(\mathbb{R}^N) \to \mathbb{R}$ by
$$
\Phi_\epsilon(v) = \|v\|_\epsilon - \int_{\mathbb{R}^N}F(v)dx,
$$
$$
\Phi_\infty(v) = \|v\|_\infty - \int_{\mathbb{R}^N}F(v)dx
$$
and
$$
\Phi_0(v) = \|v\|_0 - \int_{\mathbb{R}^N}F(v)dx.
$$

Denoting $\mathcal{F}(v) = \int_{\mathbb{R}^N}F(v)dx$ and $\mathcal{J}_\epsilon(v) = \|v\|_\epsilon$, note that $\mathcal{F} \in C^1(BV(\mathbb{R}^N))$ and $\mathcal{J}_\epsilon$ defines a locally Lipschitz continuous functional. Then we say that $v_\epsilon \in BV(\mathbb{R}^N)$ is a solution of (\ref{Pintrov}) if $0 \in \partial \Phi_\epsilon(v_\epsilon)$, where $\partial \Phi_\epsilon(v_\epsilon)$ denotes the generalized gradient of $\Phi_\epsilon$ in $v_\epsilon$, as defined in \cite{Chang}. It follows that this is equivalent to $\mathcal{F}'(v_\epsilon) \in \partial \mathcal{J}_\epsilon(v_\epsilon)$ and, since $\mathcal{J}_\epsilon$ is convex, this is written as
\begin{equation}
\|w\|_\epsilon - \|v_\epsilon\|_\epsilon \geq \int_{\mathbb{R}^N}f(v_\epsilon)(w - v_\epsilon)dx, \quad \forall w\in BV(\mathbb{R}^N).
\label{eqsolution}
\end{equation}
Hence all $v_\epsilon \in BV(\mathbb{R}^N)$ such that (\ref{eqsolution}) holds is going to be called a bounded variation solution of (\ref{Pintrov}). Analogously we define critical points of the functionals $\Phi_\infty$ and $\Phi_0$, since they have the same properties that $\Phi_\epsilon$.

\subsection{The Euler-Lagrange equation}

Since (\ref{Pintrov}) contains expressions that doesn't make sense when $\nabla u = 0$ or $u = 0$, then it can be understood just as the formal version of the Euler-Lagrange equation associated to the functional $\Phi_\epsilon$. In this section we present the precise form of an Euler-Lagrange equation satisfied by all bounded variation critical points of $\Phi_\epsilon$. In order to do so we closely follow the arguments in \cite{Kawohl}.

The first step is to consider the extension of the functionals $\mathcal{J}_\epsilon, \mathcal{F}$ and $\Phi_\epsilon$ to $L^{1^*}(\mathbb{R}^N)$, given respectively by $\overline{\mathcal{J}}_\epsilon, \overline{\mathcal{F}}, \overline{\Phi}_\epsilon: L^{1^*}(\mathbb{R}^N) \to \mathbb{R}$, where
$$
\overline{\mathcal{J}}_\epsilon(v) = 
\left\{
\begin{array}{ll}
\mathcal{J}_\epsilon(v), & \mbox{if $v \in BV(\mathbb{R}^N)$},\\
+\infty, & \mbox{if $v \in L^{1^*}(\mathbb{R}^N)\backslash BV(\mathbb{R}^N)$},
\end{array}
\right.
$$
$$
\overline{\mathcal{F}}(u) = \int_{\mathbb{R}^N}F(u)dx
$$
and $\overline{\Phi}_\epsilon = \overline{\mathcal{J}}_\epsilon - \overline{\mathcal{F}}$. It is easy to see that $\overline{\mathcal{F}}$ belongs to $C^1(L^{1^*}(\mathbb{R}^N), \mathbb{R})$ and that $\overline{\mathcal{J}}_\epsilon$ is a convex lower semicontinuous functional defined in $L^{1^*}(\mathbb{R}^N)$. Hence the subdifferential (in the sense of \cite{Szulkin}) of $\overline{\mathcal{J}}_\epsilon$, denoted by $\partial \overline{\mathcal{J}}_\epsilon$, is well defined. The following is a crucial result in obtaining an Euler-Lagrange equation satisfied by the critical points of $\Phi_\epsilon$.

\begin{lemma}
If  $v_\epsilon \in BV(\mathbb{R}^N)$ is such that $0 \in \partial \Phi_\epsilon(v_\epsilon)$, then $0 \in \partial \overline{\Phi_\epsilon}(v_\epsilon)$.
\end{lemma}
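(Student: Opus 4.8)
The plan is to show that the subdifferential of the restricted functional $\Phi_\epsilon$ on $BV(\mathbb{R}^N)$ and the subdifferential of the extended functional $\overline{\Phi}_\epsilon$ on $L^{1^*}(\mathbb{R}^N)$ detect the same critical points at a minimum of the ``convex minus smooth'' structure. Concretely, since $\mathcal{F}\in C^1(BV(\mathbb{R}^N))$ and $\overline{\mathcal{F}}\in C^1(L^{1^*}(\mathbb{R}^N))$ with $\overline{\mathcal{F}}$ restricting to $\mathcal{F}$ on $BV(\mathbb{R}^N)$, the condition $0\in\partial\Phi_\epsilon(v_\epsilon)$ is equivalent (by the sum rule for Clarke's gradient applied to a locally Lipschitz convex functional plus a $C^1$ functional, as in \cite{Chang}) to $\mathcal{F}'(v_\epsilon)\in\partial\mathcal{J}_\epsilon(v_\epsilon)$, i.e.\ to the variational inequality (\ref{eqsolution}); and $0\in\partial\overline{\Phi}_\epsilon(v_\epsilon)$ is equivalent, by the corresponding sum rule for Szulkin-type functionals (\cite{Szulkin}), to $\overline{\mathcal{F}}'(v_\epsilon)\in\partial\overline{\mathcal{J}}_\epsilon(v_\epsilon)$, that is, to
$$
\overline{\mathcal{J}}_\epsilon(w) - \overline{\mathcal{J}}_\epsilon(v_\epsilon) \geq \int_{\mathbb{R}^N} f(v_\epsilon)(w-v_\epsilon)\,dx,\qquad \forall\, w\in L^{1^*}(\mathbb{R}^N).
$$
So the whole statement reduces to promoting the inequality (\ref{eqsolution}), known for all $w\in BV(\mathbb{R}^N)$, to an inequality over all $w\in L^{1^*}(\mathbb{R}^N)$.

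First I would reduce the range of test functions: if $w\in L^{1^*}(\mathbb{R}^N)\setminus BV(\mathbb{R}^N)$, then by definition $\overline{\mathcal{J}}_\epsilon(w)=+\infty$, while the right-hand side $\int_{\mathbb{R}^N}f(v_\epsilon)(w-v_\epsilon)\,dx$ is finite — indeed $f(v_\epsilon)\in L^{(1^*)'}(\mathbb{R}^N)$ by the growth condition $(f_3)$ together with $p\in[1,1^*)$, so the pairing with $w-v_\epsilon\in L^{1^*}(\mathbb{R}^N)$ is well defined and finite. Hence the inequality holds trivially for such $w$, and it remains only to check it for $w\in BV(\mathbb{R}^N)$, where $\overline{\mathcal{J}}_\epsilon(w)=\mathcal{J}_\epsilon(w)=\|w\|_\epsilon$. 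But that is exactly (\ref{eqsolution}), which holds by hypothesis. Assembling these two cases gives the variational inequality over all of $L^{1^*}(\mathbb{R}^N)$, hence $\overline{\mathcal{F}}'(v_\epsilon)\in\partial\overline{\mathcal{J}}_\epsilon(v_\epsilon)$, hence $0\in\partial\overline{\Phi}_\epsilon(v_\epsilon)$.

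The only genuine subtlety — and the step I expect to need the most care — is the justification that the equivalence ``$0\in\partial\overline{\Phi}_\epsilon(v_\epsilon)\Longleftrightarrow$ the variational inequality above'' really does hold for the extended functional, i.e.\ that Szulkin's subdifferential calculus applies: one must confirm that $\overline{\mathcal{J}}_\epsilon$ is proper, convex, and lower semicontinuous on $L^{1^*}(\mathbb{R}^N)$ (lower semicontinuity of $\mathcal{J}_\epsilon$ with respect to $L^r$-convergence for $r\in[1,1^*]$ is already recalled in the excerpt, and the $+\infty$ extension preserves it), that $v_\epsilon$ lies in its effective domain, and that $\overline{\mathcal{F}}\in C^1(L^{1^*}(\mathbb{R}^N))$ so that the sum rule $\partial\overline{\Phi}_\epsilon(v_\epsilon)=\partial\overline{\mathcal{J}}_\epsilon(v_\epsilon)-\overline{\mathcal{F}}'(v_\epsilon)$ is valid. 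The differentiability of $\overline{\mathcal{F}}$ on $L^{1^*}$ is exactly where $(f_1)$–$(f_3)$ enter. Once these structural facts are in place, the argument is the two-line case distinction above; I would present the reduction to $w\in BV(\mathbb{R}^N)$ explicitly and invoke (\ref{eqsolution}) to close. $\hfill\Box$
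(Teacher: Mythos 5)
Your proposal is correct and follows essentially the same route as the paper: reduce the claim to the variational inequality for $\overline{\mathcal{J}}_\epsilon$ over all of $L^{1^*}(\mathbb{R}^N)$ and split into the two cases $w\in BV(\mathbb{R}^N)$ (where (\ref{eqsolution}) applies) and $w\in L^{1^*}(\mathbb{R}^N)\setminus BV(\mathbb{R}^N)$ (where the left-hand side is $+\infty$). Your added remarks on the finiteness of the right-hand side via $(f_3)$ and on the validity of the sum rule for $\overline{\Phi}_\epsilon$ are sound justifications of points the paper leaves implicit.
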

\begin{proof}
Suppose that $0 \in \partial \Phi_\epsilon(v_\epsilon)$, i.e., that $v_\epsilon$ satisfies (\ref{eqsolution}).
We would like to prove that
$$
\overline{\mathcal{J}}_\epsilon(w) - \overline{\mathcal{J}}_\epsilon(v_\epsilon) \geq \overline{\mathcal{F}}\, '(v_\epsilon)(w - v_\epsilon), \quad \forall w \in L^{1^*}(\mathbb{R}^N).
$$
To see why, consider $w \in L^{1^*}(\mathbb{R}^N)$ and note that:
\begin{itemize}
\item if $w \in BV(\mathbb{R}^N) \cap L^{1^*}(\mathbb{R}^N)$, then
\begin{eqnarray*}
\overline{\mathcal{J}}_\epsilon(w) - \overline{\mathcal{J}}_\epsilon(v_\epsilon) & = & \mathcal{J}_\epsilon(w) - \mathcal{J}_\epsilon(v_\epsilon)\\
& \geq & \mathcal{F}'(v_\epsilon)(w - v_\epsilon)\\
& = & \int_{\mathbb{R}^N}f(v_\epsilon)(w - v_\epsilon)dx\\
& = & \overline{\mathcal{F}}\, '(v_\epsilon)(w - v_\epsilon);
\end{eqnarray*}

\item if $w \in L^{1^*}(\mathbb{R}^N)\backslash BV(\mathbb{R}^N)$, since $\overline{\mathcal{J}}_\epsilon(w) = +\infty$ and $\overline{\mathcal{J}}_\epsilon(v_\epsilon) < +\infty$, it follows that
\begin{eqnarray*}
\overline{\mathcal{J}}_\epsilon(w) - \overline{\mathcal{J}}_\epsilon(v_\epsilon) & = & +\infty\\
& \geq & \overline{\mathcal{F}}\, '(v_\epsilon)(w - v_\epsilon).
\end{eqnarray*}
\end{itemize}
Therefore the result follows.
\end{proof}

Let us assume that $v_\epsilon \in BV(\mathbb{R}^N)$ is a bounded variation solution of (\ref{Pintrov}), i.e., that $v_\epsilon$ satisfies (\ref{eqsolution}). Since $0 \in \partial \Phi_\epsilon(v_\epsilon)$, by the last result it follows that $0 \in \partial \overline{\Phi}_\epsilon(v_\epsilon)$. Since $\overline{\mathcal{J}}_\epsilon$ is convex and $\overline{\mathcal{F}}$ is smooth, it follows that $\overline{\mathcal{F}}\, '(v_\epsilon) \in \partial \overline{\mathcal{J}}_\epsilon(v_\epsilon)$. Let us define now $\overline{\mathcal{J}_\epsilon^1}(v) := \int_{\mathbb{R}^N} |Dv|$ and $\overline{\mathcal{J}_\epsilon^2}(v) := \int_{\mathbb{R}^N} V(\epsilon x)|v|dx$. Then note that
$$
\overline{\mathcal{F}}\, '(v_\epsilon) \in \partial \overline{\mathcal{J}}_\epsilon(v_\epsilon) \subset \partial \overline{\mathcal{J}_\epsilon^1}(v_\epsilon) + \partial \overline{\mathcal{J}_\epsilon^2}(v_\epsilon).
$$
Then there exist $z_1^*, z_2^* \in L^N(\mathbb{R}^N)$ such that $z_1^* \in \partial \overline{\mathcal{J}_\epsilon^1}(v_\epsilon)$, $z_2^* \in \partial \overline{\mathcal{J}}_\epsilon^2(v_\epsilon)$ and
$$
\overline{\mathcal{F}}\, '(v_\epsilon) = z_1^* +z_2^* \quad \mbox{ in $L^N(\mathbb{R}^N)$.}
$$
Following the same arguments that in \cite[Proposition 4.23, pg. 529]{Kawohl}, we have that there exists $z \in L^\infty(\mathbb{R}^N, \mathbb{R}^N)$ such that $|z|_\infty \leq 1$,
\begin{equation}
-\mbox{div}{z} = z_1^* \quad \mbox{ in $L^N(\mathbb{R}^N)$}
\label{eulerlagrange1}
\end{equation}
and 
\begin{equation}
 -\int_{\mathbb{R}^N}v_\epsilon \mbox{div}z dx = \int_{\mathbb{R}^N}|Dv_\epsilon|,
 \label{eulerlagrange2}
 \end{equation}
where the divergence in (\ref{eulerlagrange1}) has to be understood in the distributional sense. Moreover, the same result implies that $z_2^*$ is such that 
\begin{equation}
z_2^* V(\epsilon x)|v_\epsilon| = v_\epsilon, \quad \mbox{a.e. in $\mathbb{R}^N$.}
\label{eulerlagrange3}
\end{equation}
Therefore, it follows from (\ref{eulerlagrange1}), (\ref{eulerlagrange2}) and (\ref{eulerlagrange3}) that $v_\epsilon$ satisfies
\begin{equation}
\left\{
\begin{array}{l}
\exists z \in L^\infty(\mathbb{R}^N,\mathbb{R}^N), \, \, \|z\|_\infty \leq 1,\, \,  \mbox{div}z \in L^N(\mathbb{R}^N), \, \, -\int_{\mathbb{R}^N}v_\epsilon \mbox{div}z dx = \int_{\mathbb{R}^N}|Dv_\epsilon|,\\
\exists z_2^* \in L^N(\mathbb{R}^N),\, \, z_2^*V(\epsilon x)|v_\epsilon| = v_\epsilon \quad \mbox{a.e. in $\mathbb{R}^N$},\\
-\mbox{div} z + z_2^* = f(v_\epsilon), \quad \mbox{a.e. in $\mathbb{R}^N$}.
\end{array}
\right.
\label{eulerlagrangeequation}
\end{equation}

Hence, (\ref{eulerlagrangeequation}) is the precise version of (\ref{Pintro}).

\section{Existence and concentration of solution with the Rabinowitz's condition}
\label{sectionexistence}

Let us first observe that by standard calculations, it is possible to prove that $\Phi_\epsilon$, $\Phi_\infty$ and $\Phi_0$ satisfy the geometrical conditions of the Mountain Pass Theorem. Then the following minimax levels are well defined
$$
c_\epsilon = \inf_{\gamma \in \Gamma_\epsilon}\sup_{t \in [0,1]}\Phi_\epsilon(\gamma(t)),
$$
$$
c_\infty = \inf_{\gamma \in \Gamma_\infty}\sup_{t \in [0,1]}\Phi_\infty(\gamma(t))
$$
and
$$
c_0 = \inf_{\gamma \in \Gamma_0}\sup_{t \in [0,1]}\Phi_0(\gamma(t)),
$$
where $\Gamma_\epsilon = \{\gamma \in C([0,1],BV(\mathbb{R}^N); \, \gamma(0) = 0 \, \mbox{and} \, \Phi_\epsilon(\gamma(1)) < 0\}$  and  $\Gamma_\infty, \Gamma_0$ are defined in an analogous way. Moreover, by study made in Section 5, it follows that there exists a critical point of $\Phi_\infty$, $w_\infty \in BV(\mathbb{R}^N)$, such that $\Phi_\infty(w_\infty) = c_\infty$. By the same reason, there exists a critical point of $\Phi_0$, $w_0 \in BV(\mathbb{R}^N)$, such that $\Phi_0(w_0) = c_0$.

Let us define the Nehari manifolds associated to $\Phi_\epsilon$, $\Phi_\infty$ and $\Phi_0$, which are well defined by (\ref{derivadaJ}), respectively by
$$
\mathcal{N}_\epsilon = \{v \in BV(\mathbb{R}^N)\backslash\{0\}; \, \Phi_\epsilon'(v)v = 0\},
$$
$$
\mathcal{N}_\infty = \{v \in BV(\mathbb{R}^N)\backslash\{0\}; \, \Phi_\infty'(v)v = 0\}
$$
and
$$
\mathcal{N}_0 = \{v \in BV(\mathbb{R}^N)\backslash\{0\}; \, \Phi_0'(v)v = 0\}.
$$

By the discussion in \cite{FigueiredoPimenta1}, it follows that $c_\epsilon = \inf_{\mathcal{N}_\epsilon}\Phi_\epsilon$, $c_\infty = \inf_{\mathcal{N}_\infty}\Phi_\infty$ and $c_0 = \inf_{\mathcal{N}_0}\Phi_0$.

\subsection{Existence results}

First of all we study the behavior of the minimax levels $c_\epsilon$, when $\epsilon \to 0^+$. For the sake of simplicity, let us suppose without lack of generality that $V(0) = V_0$.

\begin{lemma}
$\displaystyle \lim_{\epsilon \to 0^+} c_\epsilon = c_0.$
\label{lemmac_0}
\end{lemma}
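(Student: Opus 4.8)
The plan is to establish the two one-sided bounds $\liminf_{\epsilon\to0^+}c_\epsilon\ge c_0$ and $\limsup_{\epsilon\to0^+}c_\epsilon\le c_0$ separately; the first is essentially free, while the second rests on a careful choice of a one-parameter family of test functions. For the lower bound, note that $(V_1)$ gives $V(\epsilon x)\ge V_0$ for every $x\in\mathbb{R}^N$ and every $\epsilon>0$, hence $\|v\|_\epsilon\ge\|v\|_0$ and therefore $\Phi_\epsilon(v)\ge\Phi_0(v)$ for all $v\in BV(\mathbb{R}^N)$. In particular $\Gamma_\epsilon\subset\Gamma_0$, since $\Phi_\epsilon(\gamma(1))<0$ forces $\Phi_0(\gamma(1))<0$, and $\sup_{t\in[0,1]}\Phi_\epsilon(\gamma(t))\ge\sup_{t\in[0,1]}\Phi_0(\gamma(t))$ along every path; taking infima yields $c_\epsilon\ge c_0$ for every $\epsilon>0$.

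For the upper bound, I would use the ground state $w_0\in BV(\mathbb{R}^N)$ of $\Phi_0$ produced in Section 5, which satisfies $\Phi_0(w_0)=c_0=\inf_{\mathcal{N}_0}\Phi_0$ and, being a bounded variation solution, lies on $\mathcal{N}_0$; since $\|\cdot\|_0$ is positively $1$-homogeneous and, by $(f_2)$--$(f_5)$, the function $t\mapsto\Phi_0(tw_0)=t\|w_0\|_0-\int_{\mathbb{R}^N}F(tw_0)\,dx$ has a single maximum, this gives $c_0=\max_{t\ge0}\Phi_0(tw_0)$. One then estimates $c_\epsilon\le\sup_{t\ge0}\Phi_\epsilon(tw_0)$ and shows the right-hand side converges to $c_0$. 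Two facts make this work. First, the maps $t\mapsto\Phi_\epsilon(tw_0)=t\|w_0\|_\epsilon-\int_{\mathbb{R}^N}F(tw_0)\,dx$ attain their suprema on a fixed compact interval $[0,T]$ independent of $\epsilon$: by $(f_4)$ one has $F(tw_0)\ge t^\theta F(w_0)$ for $t\ge1$, with $\int_{\mathbb{R}^N}F(w_0)\,dx>0$ and $\theta>1$, while $\|w_0\|_\epsilon\le\int_{\mathbb{R}^N}|Dw_0|+|V|_\infty|w_0|_1$ by $(V_1)$, so $\Phi_\epsilon(tw_0)\to-\infty$ as $t\to+\infty$ uniformly in $\epsilon$. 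Second, on $[0,T]$ these functions converge uniformly to $t\mapsto\Phi_0(tw_0)$, because $\|w_0\|_\epsilon-\|w_0\|_0=\int_{\mathbb{R}^N}(V(\epsilon x)-V_0)|w_0|\,dx\to0$ by dominated convergence: $V(\epsilon x)\to V(0)=V_0$ for a.e. $x$, which is precisely the content of the normalization $V(0)=V_0$ together with continuity of $V$ at its minimum, and $V(\epsilon x)|w_0|\le|V|_\infty|w_0|\in L^1(\mathbb{R}^N)$. Combining, $\sup_{t\ge0}\Phi_\epsilon(tw_0)=\max_{t\in[0,T]}\Phi_\epsilon(tw_0)\to\max_{t\in[0,T]}\Phi_0(tw_0)=c_0$, whence $\limsup_{\epsilon\to0^+}c_\epsilon\le c_0$.

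The hard part is really the bookkeeping in the upper bound: one must know that the maximizing parameter $t$ stays in a bounded set uniformly in $\epsilon$ (this is exactly what the Ambrosetti--Rabinowitz condition $(f_4)$ provides, together with $V\in L^\infty$), and one must pass to the limit in the potential term $\int_{\mathbb{R}^N}V(\epsilon x)|w_0|\,dx$ (handled by dominated convergence). If one prefers not to invoke pointwise convergence of $V(\epsilon\,\cdot)$ at the minimum, an equivalent route is: given $\eta>0$, pick a compactly supported $v\neq0$ with $\max_{t\ge0}\Phi_0(tv)<c_0+\tfrac{\eta}{2}$ — available by density of $C_c^\infty(\mathbb{R}^N)$ in $BV(\mathbb{R}^N)$ for the intermediate convergence together with continuity of $v\mapsto\max_{t\ge0}\Phi_0(tv)$ — and then, once $\epsilon$ is so small that $\epsilon\,\mbox{supp}(v)$ lies in a neighbourhood of the minimum point on which $V\le V_0+\beta$, dominate $\Phi_\epsilon(tv)$ by the constant-potential functional associated with $V_0+\beta$ and let $\beta\to0^+$; this again gives $c_\epsilon<c_0+\eta$ for all small $\epsilon$.
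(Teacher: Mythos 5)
Your proposal is correct, and the lower bound $c_\epsilon\ge c_0$ is word-for-word the paper's argument (monotonicity of $\Phi_\epsilon$ in the potential). For the upper bound you take a genuinely different, and in fact leaner, route. The paper builds truncated test functions $w_{\epsilon_n}=\psi(\epsilon_n\cdot)w_0$, projects them onto the Nehari manifold $\mathcal{N}_{\epsilon_n}$ via scalars $t_{\epsilon_n}$, and then must prove that $t_{\epsilon_n}$ stays bounded away from $0$ and $+\infty$ and finally that $t_{\epsilon_n}\to1$ (using $(f_2)$, $(f_4)$, $(f_5)$ and the convergence $w_{\epsilon_n}\to w_0$ in $BV(\mathbb{R}^N)$); the estimate $c_{\epsilon_n}\le\Phi_0(t_{\epsilon_n}w_{\epsilon_n})+\int(V(\epsilon_n x)-V_0)t_{\epsilon_n}|w_{\epsilon_n}|\,dx$ then closes the argument by dominated convergence. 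You instead work with the untruncated ray $t\mapsto tw_0$, bound $c_\epsilon$ by $\sup_{t\ge0}\Phi_\epsilon(tw_0)$, and only need (i) a uniform-in-$\epsilon$ compact interval containing the maximizers, supplied by $(f_4)$ and $V\in L^\infty$, and (ii) uniform convergence of the fibering maps, which reduces to the single limit $\int(V(\epsilon x)-V_0)|w_0|\,dx\to0$. This buys you a shorter proof that bypasses the delicate $t_{\epsilon_n}\to1$ step entirely; what it costs is nothing, since both arguments ultimately hinge on the same dominated-convergence step, which requires $V(\epsilon x)\to V(0)=V_0$ a.e. Here you are actually more careful than the paper: under $(V_1)$--$(V_2)$ alone $V$ need not be continuous nor attain its infimum, a point the paper glosses over when it invokes the Dominated Convergence Theorem, and your second route via compactly supported test functions and the bound $V\le V_0+\beta$ near the minimum is a reasonable way to patch this (it still presupposes some regularity of $V$ at the minimum point, so the hypothesis gap is not fully closed, but you have at least identified it).
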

\begin{proof}
Let $\epsilon \to 0$ as $n \to +\infty$. Let $\psi\in C^\infty_0(\mathbb{R}^N)$ be such that $0\leq
\psi \leq 1$, $\psi \equiv 0$ in $\mathbb{R}^N\slash B_2(0)$, $\psi \equiv 1$ in
$B_1(0)$ and $|\nabla \psi|\leq C$ in $\mathbb{R}^N$. Let us define 
$$w_{\epsilon_n}(x) = \psi(\epsilon_n x)w_0(x),$$
where $w_0$ is the ground state critical point of $\Phi_0$.
Note that $w_{\epsilon_n} \rightarrow w_0$ in $BV(\mathbb{R}^N)$ and $\Phi_0(w_{\epsilon_n}) \rightarrow \Phi_0(w)$ as $n \to +\infty$.
Let $t_{\epsilon_n}$ be such that $t_{\epsilon_n} w_{\epsilon_n} \in \mathcal{N}_{\epsilon_n}$ and let us suppose just for a while that $t_{\epsilon_n} \rightarrow 1$ as $n \to +\infty$. Then
\begin{eqnarray*}
c_{\epsilon_n} & \leq & \Phi_{\epsilon_n}(t_{\epsilon_n} w_{\epsilon_n})\\
& = & \Phi_0(t_{\epsilon_n} w_{\epsilon_n}) +
\int_{\mathbb{R}^N}\left(V({\epsilon_n}
x) - V_0\right)t_{\epsilon_n} |w_{\epsilon_n}| dx.
\end{eqnarray*}

Using the Lebesgue Dominated Theorem, it follows that
$$
\limsup_{n \to +\infty} c_{\epsilon_n} \leq \Phi_0(w_0) = c_0.
$$

On the other hand, since $\Phi_0(v) \leq \Phi_{\epsilon_n}(v)$ for all $v \in BV(\mathbb{R}^N)$, it follows that $c_0 \leq c_{\epsilon_n}$. Then
$$
\lim_{n \to +\infty} c_{\epsilon_n} = c_0.
$$

What is left to do is to prove that in fact $t_{\epsilon_n} \to 1$, as $n \to +\infty$. 
Since $\Phi_{\epsilon_n}'(t_{\epsilon_n} w_{\epsilon_n})w_{\epsilon_n} = 0$, it follows that
$$
t_{\epsilon_n} \left( \int_{\mathbb{R}^N}|Dw_{\epsilon_n}| + \int_{\mathbb{R}^N}V({\epsilon_n} x)|w_{\epsilon_n}| dx \right) = \int_{\mathbb{R}^N}f(t_{\epsilon_n} w_{\epsilon_n})w_{\epsilon_n} dx.
$$
We claim that $(t_{\epsilon_n})_{{\epsilon_n} > 0}$ is bounded. In fact, on the contary, up to a subsequence, $t_{\epsilon_n} \rightarrow +\infty$. Let $\Sigma \subset \mathbb{R}^N$ be such that $|\Sigma| > 0$ and $w_0(x) \neq 0$ for all $x\in \Sigma$. Hence it holds for all $n \in \mathbb{N}$ that
\begin{eqnarray*}
\|w_{\epsilon_n}\|_{\epsilon_n} & = & \int_{\mathbb{R}^N}\frac{f(t_{\epsilon_n}w_{\epsilon_n})t_{\epsilon_n}w_{\epsilon_n}}{t_{\epsilon_n}}dx\\
& \geq &  \int_{\Sigma}\frac{\theta F(t_{\epsilon_n}w_{\epsilon_n})}{t_{\epsilon_n}}dx.
\end{eqnarray*}
Then by $(f_4)$ and Fatou's Lemma it follows that 
$$
\|w_{\epsilon_n}\|_{\epsilon_n} \rightarrow +\infty, \quad \mbox{as $n\to\infty$},
$$ which contradicts the fact that $w_{\epsilon_n} \to w_0$ in $BV(\mathbb{R}^N)$ as $n \to \infty$.

Now we have to verify that $t_{\epsilon_n} \not \to 0$ as $n \to +\infty$. In fact, on the contrary, from $(f_2)$ and the fact that $t_{\epsilon_n}w_{\epsilon_n} \in \mathcal{N}_{\epsilon_n}$, we would have that
$$
\|w_{\epsilon_n}\|_{\epsilon_n} = \int_{\mathbb{R}^N} f(t_{\epsilon_n}w_{\epsilon_n})w_{\epsilon_n} dx =  o_n(1),
$$
a clear contradiction.
Then there exist $\alpha,\beta >0$ such that 
$$
\alpha \leq t_{\epsilon_n} \leq \beta \quad \mbox{for all $n \in \mathbb{N}$}
$$
and then, up to a subsequence, $t_n \to \overline{t} > 0$, as $n \to +\infty$. Since $w_{\epsilon_n} \to w_0$ in $BV(\mathbb{R}^N)$, from the definition of $w_0$, it follows by $(f_5)$ that $\overline{t} = 1$.
\end{proof}

Since by $(V_2)$, $V_0 < V_\infty$, it follows from the monotonicity of the energy functional w.r.t. the potentials that
\begin{equation}
c_0 < c_\infty.
\label{c_0c_infty}
\end{equation}

As a consequence of Lemma \ref{lemmac_0} and (\ref{c_0c_infty}), it holds the following result.

\begin{corollary}
There exists $\epsilon_0 > 0$ such that $c_\epsilon < c_\infty$ for all $\epsilon \in (0,\epsilon_0)$.
\label{corolariocepsiloncinfty}
\end{corollary}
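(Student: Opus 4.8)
The plan is simply to combine the two facts established immediately before the statement: the one-sided limit $\displaystyle\lim_{\epsilon\to 0^+}c_\epsilon = c_0$ from Lemma~\ref{lemmac_0}, and the strict inequality $c_0 < c_\infty$ recorded in (\ref{c_0c_infty}), which in turn comes from $(V_2)$ (so that $V_0 < V_\infty$) together with the monotonicity of the mountain pass level with respect to the potential. Once both ingredients are in hand, the corollary is nothing more than the $\epsilon$--$\delta$ formulation of the limit.

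Concretely, I would set $\delta := \frac{1}{2}\left(c_\infty - c_0\right)$, which is strictly positive by (\ref{c_0c_infty}). Applying Lemma~\ref{lemmac_0} with this choice of $\delta$, there exists $\epsilon_0 > 0$ such that $|c_\epsilon - c_0| < \delta$ for every $\epsilon \in (0,\epsilon_0)$. For such $\epsilon$ one then has
$$
c_\epsilon < c_0 + \delta = \tfrac{1}{2}\left(c_0 + c_\infty\right) < c_\infty,
$$
which is exactly the assertion of the corollary.

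I do not expect any genuine obstacle here: all of the analytic work is already carried out in Lemma~\ref{lemmac_0} (the comparison $c_0 \le c_{\epsilon_n}$, the cut-off test function $w_{\epsilon_n} = \psi(\epsilon_n x)\,w_0$, and the control of the Nehari scaling $t_{\epsilon_n}\to 1$ via $(f_2)$, $(f_4)$, $(f_5)$ and Fatou's Lemma) and in the monotonicity argument giving (\ref{c_0c_infty}). The corollary is merely the elementary remark that a quantity converging to a value strictly below $c_\infty$ must itself eventually be strictly below $c_\infty$; its role is to make the threshold $\epsilon_0$ explicit for use in the subsequent compactness/Concentration-Compactness analysis.
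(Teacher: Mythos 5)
Your argument is correct and is exactly the paper's own reasoning: the authors state the corollary as an immediate consequence of Lemma \ref{lemmac_0} and the strict inequality (\ref{c_0c_infty}), which is precisely the $\epsilon$--$\delta$ unpacking you carry out. No gap.
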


By \cite[Theorem 4]{FigueiredoPimenta2}, for each $\epsilon > 0$, there exists a Palais-Smale sequence $(v_n) \subset BV(\mathbb{R}^N)$ to $\Phi_\epsilon$ in the level $c_\epsilon$, i.e.
\begin{equation}
\lim_{n \to \infty}\Phi_\epsilon(v_n) = c_\epsilon
\label{Phivn}
\end{equation}
and
\begin{equation}
\|w\|_\epsilon - \|v_n\|_\epsilon \geq \int_{\mathbb{R}^N}f(v_n)(w - v_n)dx - \tau_n \|w - v_n\|_\epsilon, \quad \forall w \in BV(\mathbb{R}^N),
\label{PSepsilon}
\end{equation}
where $\tau_n \to 0$, as $n \to \infty$.

\begin{lemma}
The sequence $(v_n)$ is bounded in $BV(\mathbb{R}^N)$.
\label{lemmabounded}
\end{lemma}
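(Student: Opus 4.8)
The plan is to combine the Ambrosetti--Rabinowitz type condition $(f_4)$ with the Palais--Smale inequality (\ref{PSepsilon}), exploiting the fact that $v \mapsto \|v\|_\epsilon$ is positively $1$-homogeneous. This homogeneity is what replaces, in the nonsmooth setting, the evaluation of the derivative of $\Phi_\epsilon$ at $v_n$ in the direction $v_n$: testing (\ref{PSepsilon}) against suitable multiples of $v_n$ produces exactly the inequalities one would obtain from $\Phi_\epsilon'(v_n)v_n$ in the classical argument.

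Concretely, I would first take $w = 2v_n$ in (\ref{PSepsilon}). Since $\|2v_n\|_\epsilon = 2\|v_n\|_\epsilon$ and $\|2v_n - v_n\|_\epsilon = \|v_n\|_\epsilon$, this yields $\int_{\mathbb{R}^N} f(v_n)v_n\,dx \leq (1+\tau_n)\|v_n\|_\epsilon$. On the other hand, (\ref{Phivn}) gives $\int_{\mathbb{R}^N} F(v_n)\,dx = \|v_n\|_\epsilon - c_\epsilon + o_n(1)$, so from $(f_4)$ in the integrated form $\theta\int_{\mathbb{R}^N} F(v_n)\,dx \leq \int_{\mathbb{R}^N} f(v_n)v_n\,dx$ one obtains
$$
\theta\|v_n\|_\epsilon - \theta c_\epsilon + o_n(1) \;=\; \theta\int_{\mathbb{R}^N} F(v_n)\,dx \;\leq\; \int_{\mathbb{R}^N} f(v_n)v_n\,dx \;\leq\; (1+\tau_n)\|v_n\|_\epsilon .
$$
Hence $(\theta - 1 - \tau_n)\|v_n\|_\epsilon \leq \theta c_\epsilon + o_n(1)$. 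Since $\theta > 1$ and $\tau_n \to 0$, for $n$ large the coefficient $\theta - 1 - \tau_n$ is bounded below by $(\theta-1)/2 > 0$, so $(\|v_n\|_\epsilon)$ is bounded; as $\|\cdot\|_\epsilon$ is equivalent to the usual norm of $BV(\mathbb{R}^N)$ by $(V_1)$ (together with $(V_2)$ or $(V_3)$), the sequence $(v_n)$ is bounded in $BV(\mathbb{R}^N)$, which is the claim.

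I do not expect a genuine obstacle here; the only points deserving care are that the homogeneity identities $\|2v_n\|_\epsilon = 2\|v_n\|_\epsilon$ and $\|2v_n - v_n\|_\epsilon = \|v_n\|_\epsilon$ are indeed licit substitutions in (\ref{PSepsilon}), and that $\theta - 1 - \tau_n$ stays uniformly positive, both of which are immediate from $(f_4)$ and $\tau_n \to 0$. One could alternatively test with $w = 0$ to get the companion lower bound $\int_{\mathbb{R}^N} f(v_n)v_n\,dx \geq (1-\tau_n)\|v_n\|_\epsilon$, which is not needed for boundedness but is convenient later.
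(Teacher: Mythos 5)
Your proof is correct and follows essentially the same route as the paper: test (\ref{PSepsilon}) with $w=2v_n$ and use the $1$-homogeneity of $\|\cdot\|_\epsilon$ to get $\int_{\mathbb{R}^N} f(v_n)v_n\,dx \leq (1+\tau_n)\|v_n\|_\epsilon$, then combine with $(f_4)$ and $\Phi_\epsilon(v_n)=c_\epsilon+o_n(1)$; your inequality $(\theta-1-\tau_n)\|v_n\|_\epsilon \leq \theta c_\epsilon + o_n(1)$ is just the paper's estimate $c_\epsilon+o_n(1)\geq \left(1-\tfrac{1}{\theta}-\tfrac{\tau_n}{\theta}\right)\|v_n\|_\epsilon$ multiplied through by $\theta$. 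No gaps.
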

\begin{proof}
Let us consider $w = 2v_n$ in (\ref{PSepsilon}) and note that
$$
\|v_n\|_\epsilon \geq \int_{\mathbb{R}^N}f(v_n)v_ndx - \tau_n\|v_n\|_\epsilon,
$$
which implies that
\begin{equation}
(1 + \tau_n)\|v_n\|_\epsilon \geq \int_{\mathbb{R}^N}f(v_n)v_ndx.
\label{lemmaboundedeq1}
\end{equation}
Then, by $(f_4)$ and (\ref{lemmaboundedeq1}), 
\begin{eqnarray*}
c_\epsilon + o_n(1) & \geq & \Phi_\epsilon(v_n)\\
& = & \|v_n\|_\epsilon + \int_{\mathbb{R}^N}\left(\frac{1}{\theta}f(v_n)v_n - F(v_n)\right)dx - \int_{\mathbb{R}^N}\frac{1}{\theta}f(v_n)v_ndx\\
& \geq & \|v_n\|_\epsilon \left(1- \frac{1}{\theta} - \frac{\tau_n}{\theta}\right)\\
& \geq & C\|v_n\|_\epsilon,
\end{eqnarray*}
for some $C > 0$ which does not depend on $n \in \mathbb{N}$. Then the result follows.
\end{proof}

By the last result and the compactness of the embeddings of $BV(\mathbb{R}^N)$ in $L^q_{loc}(\mathbb{R}^N)$ for $1 \leq q < 1^*$, it follows that there exists $v_\epsilon \in BV_{loc}(\mathbb{R}^N)$ such that 
\begin{equation}
v_n \to v_\epsilon \quad \mbox{in $L^q_{loc}(\mathbb{R}^N)$ for $1 \leq q < 1^*$}
\label{convergencevn}
\end{equation}
and
$$
v_n \to v_\epsilon \quad \mbox{a.e. in $\mathbb{R}^N$,}
$$
as $n \to +\infty$. Note that $v_\epsilon \in BV(\mathbb{R}^N)$. In fact, if $R > 0$, by the semicontinuity of the norm in $BV(B_R(0))$ w.r.t. the $L^1(B_R(0))$ topology it follows that
\begin{equation}
\|v_\epsilon\|_{BV(B_R(0))} \leq \liminf_{n \to +\infty}\|v_n\|_{BV(B_R(0))} \leq \liminf_{n \to +\infty}\|v_n\|_{BV(\mathbb{R}^N)} \leq C,
\label{vBV}
\end{equation}
where $C$ does not depend on $n$ or $R$.
Since the last inequality holds for every $R > 0$, then $v_\epsilon \in BV(\mathbb{R}^N)$.

The following is a crucial result in our argument. In its proof we use the well known {\it Concentration of Compactness Principle} of Lions \cite{Lions}.

\begin{proposition}
If $\epsilon < \epsilon_0$, $\epsilon_0$ like in Corollary \ref{corolariocepsiloncinfty}, then
\begin{equation}
v_n \to v_\epsilon \quad \mbox{in $L^q(\mathbb{R}^N)$ for all $1 \leq q < 1^*$.}
\label{convergence}
\end{equation}
\label{propositionconvergence}
\end{proposition}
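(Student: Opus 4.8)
The plan is to run Lions' Concentration--Compactness Principle \cite{Lions} on the sequence of finite nonnegative Radon measures $\rho_n := |Dv_n| + V(\epsilon x)|v_n|\,\mathcal{L}^N$, whose masses are $\rho_n(\mathbb{R}^N) = \|v_n\|_\epsilon$ and, by Lemma \ref{lemmabounded}, converge along a subsequence to some $\ell \geq 0$. If $\ell = 0$ then $v_n \to 0$ in $BV(\mathbb{R}^N)$ and there is nothing to prove, so assume $\ell > 0$ and normalize. The principle yields, up to a subsequence, exactly one of the alternatives \emph{compactness}, \emph{vanishing} or \emph{dichotomy}; I would exclude the last two, the first using that the mountain--pass level $c_\epsilon$ is strictly positive and the second using the strict inequality $c_\epsilon < c_\infty$ of Corollary \ref{corolariocepsiloncinfty}, and then extract strong $L^q$ convergence from the compactness case.

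\emph{Vanishing cannot occur.} If $\sup_{y \in \mathbb{R}^N}\int_{B_R(y)}|v_n|\,dx \to 0$ for every $R>0$, then the $BV(\mathbb{R}^N)$ analogue of Lions' vanishing lemma (obtained as in $W^{1,1}$, from the compact embedding $BV(B_1)\hookrightarrow L^1(B_1)$, the embedding $BV(\mathbb{R}^N)\hookrightarrow L^{1^*}(\mathbb{R}^N)$ and interpolation) gives $v_n \to 0$ in $L^q(\mathbb{R}^N)$ for every $q\in(1,1^*)$. By $(f_2)$--$(f_3)$ this forces $\int_{\mathbb{R}^N}F(v_n)\,dx\to 0$ and $\int_{\mathbb{R}^N}f(v_n)v_n\,dx \to 0$. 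Testing (\ref{PSepsilon}) with $w = 2v_n$ and with $w=\tfrac12 v_n$ bounds $\|v_n\|_\epsilon$ from above by $(1-\tau_n)^{-1}\int_{\mathbb{R}^N}f(v_n)v_n\,dx \to 0$, hence $\Phi_\epsilon(v_n)\to 0$, contradicting $\Phi_\epsilon(v_n)\to c_\epsilon > 0$.

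\emph{Dichotomy cannot occur --- this is the crux of the argument.} If dichotomy holds with constant $\alpha\in(0,\ell)$, I would pick $R\to\infty$, centres $y_n$ and radii $R_n\to\infty$ and use smooth cut-offs supported in $B_R(y_n)$ and in $B_{R_n}^c(y_n)$ to write $v_n = v_n^1 + v_n^2 + o_n(1)$ in $BV(\mathbb{R}^N)$, with $\mathrm{dist}(\mathrm{supp}\,v_n^1,\mathrm{supp}\,v_n^2)\to\infty$, $\liminf_n\|v_n^1\|_\epsilon \geq \alpha - o_R(1)$, $\liminf_n\|v_n^2\|_\epsilon \geq \ell - \alpha - o_R(1)$, both $v_n^1,v_n^2$ asymptotically Palais--Smale for $\Phi_\epsilon$, and $\Phi_\epsilon(v_n) = \Phi_\epsilon(v_n^1) + \Phi_\epsilon(v_n^2) + o_n(1)$. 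By $(f_4)$ each piece has nonnegative limiting energy, $\liminf_n\Phi_\epsilon(v_n^k)\geq (1-1/\theta)\liminf_n\|v_n^k\|_\epsilon \geq 0$. Moreover, since the splitting follows the translations $y_n$, at least one piece has support escaping to infinity; there $V(\epsilon x)\geq V_\infty - o_n(1)$ by $(V_2)$, so $\Phi_\epsilon(v_n^k)\geq \Phi_\infty(v_n^k) - o_n(1)$ and $v_n^k$ is an asymptotically Palais--Smale sequence at a positive level for $\Phi_\infty$; projecting it onto $\mathcal{N}_\infty$, using $c_\infty=\inf_{\mathcal{N}_\infty}\Phi_\infty$ and $(f_5)$, its limiting energy is $\geq c_\infty$. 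Combining, $c_\epsilon = \lim_n\Phi_\epsilon(v_n) \geq 0 + c_\infty$, contradicting Corollary \ref{corolariocepsiloncinfty}. The same comparison shows that, in the compactness alternative, the centres $(y_n)$ must remain bounded, since otherwise the whole mass $\ell$ escapes to infinity and the argument above again forces $c_\epsilon\geq c_\infty$.

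\emph{Conclusion.} Hence compactness holds with bounded centres: for each $\eta>0$ there are $R>0$ and $n_0\in\mathbb{N}$ with $\int_{B_R^c(0)}\big(|Dv_n| + V(\epsilon x)|v_n|\big) < \eta$ for all $n\geq n_0$, so by $(V_1)$ one gets $\int_{B_R^c(0)}|v_n|\,dx < \eta/V_0$. Since $v_n\to v_\epsilon$ in $L^1_{loc}(\mathbb{R}^N)$ by (\ref{convergencevn}) and $v_\epsilon\in L^1(\mathbb{R}^N)$, letting $n\to\infty$ and then $\eta\to 0$ gives $v_n\to v_\epsilon$ in $L^1(\mathbb{R}^N)$; interpolating with the uniform bound on $\|v_n\|_{L^{1^*}(\mathbb{R}^N)}$ from Lemma \ref{lemmabounded} and the embedding $BV(\mathbb{R}^N)\hookrightarrow L^{1^*}(\mathbb{R}^N)$ yields $v_n\to v_\epsilon$ in $L^q(\mathbb{R}^N)$ for all $1\leq q<1^*$, i.e. (\ref{convergence}). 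I expect the exclusion of dichotomy to be the hard part, and there the strict gap $c_\epsilon<c_\infty$ is exactly what makes the escaping mass impossible.
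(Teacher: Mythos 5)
Your overall scheme --- Lions' principle, exclusion of vanishing via the $BV$ Lions-type lemma and $c_\epsilon>0$, exclusion of dichotomy via $c_\epsilon<c_\infty$, then compactness with bounded centres, a tail estimate and interpolation --- matches the paper's, and your conclusion is correct; but your exclusion of dichotomy takes a genuinely different route. You normalize the energy density $|Dv_n|+V(\epsilon x)|v_n|\,dx$ and run the classical splitting argument: cut $v_n$ into two asymptotically critical pieces with additive energy, give the non-escaping piece energy $\geq 0$ via $(f_4)$ and the escaping piece energy $\geq c_\infty$ by projecting onto $\mathcal{N}_\infty$, and conclude $c_\epsilon\geq c_\infty$. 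The paper instead normalizes $\rho_n=|v_n|/|v_n|_1$ (so dichotomy controls only $L^1$ mass, not total variation) and, when the centres $(y_n)$ stay bounded, avoids splitting altogether: by Fatou, $(f_4)$--$(f_5)$ and the Nehari characterization it shows the weak limit $v_\epsilon$ already attains $c_\epsilon$, deduces $\|v_n\|_\epsilon\to\|v_\epsilon\|_\epsilon$ and hence $v_n\to v_\epsilon$ in $L^1(\mathbb{R}^N)$, which directly contradicts the mass $1-\alpha$ escaping; only for unbounded $(y_n)$ does it use the translation/projection comparison that you apply to the escaping piece. Your choice of $\rho_n$ is what makes the splitting workable (it kills the $|Dv_n|$-mass in the annulus), and your route is closer to the textbook Lions scheme; the price is that in $BV(\mathbb{R}^N)$ the assertions ``$\Phi_\epsilon(v_n)=\Phi_\epsilon(v_n^1)+\Phi_\epsilon(v_n^2)+o_n(1)$'' and ``each piece is asymptotically Palais--Smale'' are exactly where the non-smoothness bites: $\Phi_\epsilon'(v)w$ is defined only for directions $w$ whose singular derivative is absolutely continuous with respect to $(Dv)^s$ and which vanish where $v$ does, so you must test (\ref{PSepsilon}) with $w=v_n\pm t\varphi v_n$ and compute $(D(\varphi v_n))^s=\varphi (Dv_n)^s$ as the paper does inside its Claim; moreover the projection of the escaping piece onto $\mathcal{N}_\infty$ requires $\Phi_\infty'(v_n^2)v_n^2\leq o_n(1)$, which uses $V(\epsilon x)\geq V_\infty-o_n(1)$ on its support (available, since that support lies outside balls of radius tending to infinity). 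These steps are all completable, so I regard your proof as a valid alternative rather than gapped, but the splitting is the part that genuinely needs writing out; the paper's Fatou/Nehari argument buys freedom from all annulus and splitting estimates at the cost of a less symmetric case analysis.
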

\begin{proof}
Let us apply the Concentration of Compactness Principle of Lions to the following bounded sequence in $L^1(\mathbb{R}^N)$,
$$
\rho_n(x):= \frac{|v_n(x)|}{|v_n|_1}.
$$

For future reference, note that
\begin{equation}
|v_n|_1 \not \to 0, \quad \mbox{as $n \to +\infty$.}
\label{equ_n0}
\end{equation}
In fact, otherwise, by the boundedness of $(v_n)$ in $L^{1^*}(\mathbb{R}^N)$, by interpolation inequality $(v_n)$ would converge to $0$ in $L^q(\mathbb{R}^N)$ for all $1 \leq q < 1^*$. By taking $w = v_n +tv_n$ in (\ref{PSepsilon}) and doing $t \to 0$, it is easy to see that
$$
\|v_n\|_\epsilon = \int_{\mathbb{R}^N}f(v_n)v_ndx + o_n(1).
$$
Then, by the last equality, $(f_2)$, $(f_3)$ and the fact that $v_n \to 0$ in $L^q(\mathbb{R}^N)$ for all $1 \leq q < 1^*$, the Lebesgue Convergence Theorem imply that $v_n \to 0$ in $BV(\mathbb{R}^N)$, implying that $c_\epsilon = 0$, which contradicts the fact that $c_\epsilon > 0$.

Since  $(\rho_n)$ is a bounded sequence in $L^1(\mathbb{R}^N)$, the {\it Concentration of Compactness Principle} implies that one and only one of the following statements holds:
\begin{description}
\item [{\it (Vanishing)}] $\displaystyle \lim_{n \to +\infty}\sup_{y \in \mathbb{R}^N} \int_{B_R(y)}\rho_n dx = 0$, $\forall R >0$;
\item [{\it (Compactness)}] There exist $(y_n) \subset \mathbb{R}^N$ such that for all $\eta > 0$, there exists $R > 0$ such that
\begin{equation}
\int_{B_R(y_n)}\rho_n dx \geq 1 - \eta, \quad \forall n \in \mathbb{N};
\label{compactness}
\end{equation}
\item [{\it (Dichotomy)}] There exist $(y_n) \subset \mathbb{R}^N$, $\alpha \in (0,1)$, $R_1 > 0$, $R_n \to +\infty$ such that the functions $\displaystyle \rho_{n,1}(x) := \chi_{B_{R_1}(y_n)}(x)\rho_n(x)$ and $\displaystyle \rho_{n,2}(x) := \chi_{B_{R_n}^c(y_n)}(x)\rho_n(x)$ satisfy
\begin{equation}
\int_{\mathbb{R}^N}\rho_{1,n} dx \to \alpha \quad \mbox{and} \quad \int_{\mathbb{R}^N}\rho_{2,n} dx \to 1 - \alpha.
\label{dichotomy}
\end{equation}
\end{description}

Our objective is to show that $(\rho_n)$ verifies the {\it Compactness} condition and in order to do so we act by excluding all the others possibilities. 

Note that {\it Vanishing} does not occur. In fact, otherwise, by \cite[Theorem 1.1]{FigueiredoPimenta2}, it would hold that $\rho_n \to 0$ in $L^q(\mathbb{R}^N)$, for all $1 \leq q < 1^*$. Taking (\ref{equ_n0}) into account, this would imply that $v_n \to 0$ in $L^q(\mathbb{R}^N)$, for all $1 \leq q < 1^*$ and then, this would led us to $c_\epsilon = 0$, a clear contradiction.

Let us show now that {\it Dichotomy} also does not hold. Firstly note that (\ref{PSepsilon}) implies that
\begin{equation}
\Phi_\epsilon'(v_n)v_n = o_n(1), \quad \mbox{as $n \to +\infty$.}
\end{equation}

As far as the sequence $(y_n)$ is concerned, let us consider the two possible situations.
\begin{itemize}
\item $(y_n)$ is bounded:

In this case the function $v_\epsilon$ is nontrivial, since 
$$
\int_{B_R(y_n)}\frac{|v_n|}{|v_n|_1}dx \to \alpha,
$$
implies that
$$
\int_{B_R(y_n)}|v_n|dx \geq \delta, \quad \mbox{for all $n$ sufficiently large.}
$$
Then, by taking $R_0 > 0$ such that $B_R(y_n) \subset B_{R_0}(0)$ for all $n \in \mathbb{N}$, it follows that
$$
\int_{B_{R_0}(0)}|v_n|dx \geq \delta, \quad \mbox{for all $n$ sufficiently large,}
$$
implying by (\ref{convergencevn}) that
\begin{equation}
\int_{B_{R_0}(0)}|v_\epsilon|dx \geq \delta.
\label{vnontrivial}
\end{equation}

Now let us show the following claim.
\begin{claim}
$\displaystyle \Phi_\epsilon'(v_\epsilon)v_\epsilon \leq 0.$
\label{derivadanegativa}
\end{claim}

Note that, if $\varphi \in C^\infty_0(\mathbb{R}^N)$, $0 \leq \varphi \leq 1$, $\varphi \equiv 1$ in $B_R(0)$ and $\varphi \equiv 0$ in $B_{2R}(0)^c$, for $\varphi_R := \varphi (\cdot/R)$, it follows that for all $v \in BV(\mathbb{R}^N)$, 
\begin{equation}
(D(\varphi_R v))^s \quad \mbox{is absolutely continuous w.r.t.} \quad (Dv)^s.
\label{derivadafuncaoteste}
\end{equation} 
In fact, note that
$$
D(\varphi_R v) = \nabla \varphi_R v + \varphi_R Dv = \nabla \varphi_R v + \varphi_R Dv^a + \varphi_R Dv^s , \quad \mbox{in $\mathcal{D}'(\mathbb{R}^N)$}.
$$
Then it follows that 
$$
(D(\varphi_R v))^s = (\varphi_R (Dv)^s)^s = \varphi_R (Dv)^s.
$$

Taking (\ref{derivadafuncaoteste}) into account, the fact that $\varphi_R v_n$ is equal to $0$ a.e. in the set where $v_n$ vanishes and also the fact that $\frac{\varphi_R \mu}{|\varphi_R \mu|} = \frac{\mu}{|\mu|}$ a.e. in $B_R(0)$, it is well defined $\Phi_\epsilon'(v_n)(\varphi_R v_n)$ and, by (\ref{Jlinha}), it follows that
\begin{eqnarray*}
\Phi_\epsilon'(v_n)(\varphi_R v_n) & = & \int_{\mathbb{R}^N}\frac{((Dv_n)^a)^2 \varphi_R + v_n(Dv_n)^a \cdot \nabla \varphi_R}{|(Dv_n)^a|}dx \\
& & + \int_{\mathbb{R}^N}\frac{Dv_n}{|Dv_n|}\frac{\varphi_R(Dv_n)^s}{|\varphi_R (Dv_n)^s|}|\varphi_R(D v_n)^s| + \\
& & + \int_{\mathbb{R}^N}V(\epsilon x) \mbox{sgn}(v_n)(\varphi_R v_n)dx - \int_{\mathbb{R}^N}f(v_n)\varphi_R v_ndx\\
& = & \int_{\mathbb{R}^N}\varphi_R |(Dv_n)^a|dx + \int_{\mathbb{R}^N}\frac{v_n(Dv_n)^a \cdot \nabla\varphi_R}{|(Dv_n)^a|}dx +\\
& & + \int_{\mathbb{R}^N}\frac{(Dv_n)^s}{|(Dv_n)^s|}\frac{\varphi_R(Dv_n)^s}{|\varphi_R (Dv_n)^s|}|\varphi_R(D v_n)^s| + \int_{\mathbb{R}^N}V(\epsilon x) |v_n|\varphi_R dx -\\
& & - \int_{\mathbb{R}^N}f(v_n)\varphi_R v_ndx.
\end{eqnarray*}
The last inequality together with the lower semicontinuity of the norm in $BV(B_R(0))$ w.r.t. the $L^1(B_R(0))$ convergence and the fact that $\Phi_\epsilon'(v_n)(\varphi_R v_n) = o_n(1)$ (since $(\varphi_R v_n)$ is bounded in $BV(\mathbb{R}^N))$, imply that
\begin{equation}
\int_{B_R(0)}|Dv_\epsilon| + \liminf_{n \to \infty}\int_{\mathbb{R}^N}\frac{v_n(Dv_n)^a \cdot \nabla\varphi_R}{|(Dv_n)^a|}dx + \int_{\mathbb{R}^N}V(\epsilon x)\varphi_R|v_\epsilon|dx \leq \int_{\mathbb{R}^N}f(v_\epsilon)v_\epsilon \varphi_Rdx.
\label{eqv}
\end{equation}
By doing $R \to +\infty$ in both sides of (\ref{eqv}) we get that
\begin{equation}
\int_{\mathbb{R}^N}|Dv_\epsilon| + \int_{\mathbb{R}^N}V(\epsilon x)|v_\epsilon|dx \leq \int_{\mathbb{R}^N}f(v_\epsilon)v_\epsilon dx
\label{inequalityv}
\end{equation}
what proves the claim.

By the Claim and (\ref{vnontrivial}), it follows that there exists $t_\epsilon \in (0,1]$ such that $t_\epsilon v_\epsilon \in \mathcal{N}_\epsilon$. 

Note also that
\begin{equation}
c_\epsilon + o_n(1) = \Phi_\epsilon(v_n) +o_n(1) = \Phi_\epsilon(v_n) - \Phi'_\epsilon(v_n)v_n = \int_{\mathbb{R}^N}\left(f(v_n)v_n - F(v_n)\right) dx.
\label{convergenciaf}
\end{equation}

Then applying Fatou Lemma in the last inequality together with $(f_4)$, it follows that
\begin{eqnarray*}
c_\epsilon & \geq &  \int_{\mathbb{R}^N}\left(f(v_\epsilon)v_\epsilon - F(v_\epsilon)\right) dx\\
& \geq & \int_{\mathbb{R}^N}\left(f(t_\epsilon v_\epsilon)t_\epsilon v_\epsilon - F(t_\epsilon v_\epsilon)\right) dx\\
& = & \Phi_\epsilon(t_\epsilon v_\epsilon) - \Phi'_\epsilon(t_\epsilon v_\epsilon)t_\epsilon v_\epsilon\\
& = & \Phi_\epsilon(t_\epsilon v_\epsilon)\\
& \geq & c_\epsilon.
\end{eqnarray*}

Hence, $t_\epsilon=1$ and $\Phi_{\epsilon}(v_\epsilon)=c_\epsilon$. This together with (\ref{convergenciaf}) and $(f_4)$ yield
$$
f(v_n)v_n \to f(v_\epsilon)v_\epsilon \quad \mbox{in $L^1(\mathbb{R}^N)$}
$$
$$
F(v_n) \to F(v_\epsilon) \quad \mbox{in $L^1(\mathbb{R}^N)$}
$$
and
$$
\|v_n\|_\epsilon \to \|v_\epsilon\|_\epsilon, 
$$
from where it follows that  
$$
v_n \to v_\epsilon \quad \mbox{in $L^1(\mathbb{R}^N)$.}
$$

Here, we have used the fact that $(f_4)$ implies that 
$$
(1-\theta)f(\tilde{v}_n)\tilde{v}_n \leq f(\tilde{v}_n)\tilde{v}_n - F(\tilde{v}_n),
$$
then by applying the Lebesgue Dominated Convergence Theorem, it follows that 
$$
f(v_n)v_n \to f(\tilde{v})\tilde{v} \quad \mbox{in $L^1(\mathbb{R})$.}
$$

As a consequence, since $(y_n)$ is a bounded sequence and $R_n \to +\infty$,  the $L^1(\mathbb{R}^N)$ convergence of $(v_n)$ leads to
\begin{equation}
\int_{B_{R_n}^c(y_n)}|v_n| dx \to 0 \quad \mbox{as $n \to +\infty$.}
\label{convergenciavn}
\end{equation}
On the other hand, since $v_n \to v_\epsilon \neq 0$ in $L^1(\mathbb{R}^N)$ and by (\ref{dichotomy}), it follows that
$$
\int_{B_{R_n}^c(y_n)}|v_n| dx \to (1-\alpha) |v_\epsilon|_{L^1(\mathbb{R}^N)} > 0, \quad \mbox{as $n \to +\infty$,}
$$
a clear contradiction with (\ref{convergenciavn}).
\item $(y_n)$ is unbounded:

In this case we should proceed as in the case where $(y_n)$ were bounded, but now dealing with the sequence $(\tilde{v}_n)$ where $\tilde{v}_n = v_n(\cdot - y_n)$. In fact, since $\|v_n\|_{BV(\mathbb{R}^N)} = \|\tilde{v}_n\|_{BV(\mathbb{R}^N)}$, it follows that $(\tilde{v}_n)$ is bounded and then converges, up to a subsequence, to some function $\tilde{v} \in BV(\mathbb{R}^N)$ in $L^1_{loc}(\mathbb{R}^N)$, where $\tilde{v} \neq \equiv 0$ by (\ref{dichotomy}).

\begin{claim}
\begin{equation}
\displaystyle \Phi_\infty'(\tilde{v})\tilde{v} \leq 0.
\label{derivadanegativaIinfty}
\end{equation}
\label{Claimunbounded}
\end{claim}

In order to prove this claim, let us denote,  for $v \in BV(\mathbb{R}^N)$,
$$
\|v\|_{\epsilon,y_n} := \int_{\mathbb{R}^N}|Dv| + \int_{\mathbb{R}^N}V(\epsilon x + \epsilon y_n)|v|dx
$$
and
$$
\Phi_{\epsilon,y_n}(v) := \|v\|_{\epsilon,y_n} - \int_{\mathbb{R}^N}F(v)dx.
$$
Note that, as before, $\Phi_{\epsilon,y_n}'(v)w$ is well defined for all $v,w \in BV(\mathbb{R}^N)$ such that $(Dw)^s$ is absolutely continuous w.r.t. $(Dv)^s$ and $w$ is equal to $0$ a.e. in the set where $v$ vanishes. Moreover, 
\begin{equation}
\begin{array}{c}
\displaystyle \Phi_{\epsilon,y_n}'(v)w = \int_{\mathbb{R}^N} \frac{(Dv)^a(Dw)^a}{|(Dv)^a|}dx + \int_{\mathbb{R}^N} \frac{Dv}{|Dv|}(x)\frac{Dw}{|Dw|}(x)|(Dw)|^s \\
\displaystyle + \int_{\mathbb{R}^N}V(\epsilon x + \epsilon y_n)\mbox{sgn}(v) w dx - \int_{\mathbb{R}^N}f(v)wdx.
\end{array}
\label{Phiynderivada}
\end{equation}

Since $\displaystyle \int_{\mathbb{R}^N}|Dv_n| = \int_{\mathbb{R}^N}|D\tilde{v}_n|$, from (\ref{PSepsilon}), by a change of variable, for all $w \in BV(\mathbb{R}^N)$ we have that
\begin{equation}
\begin{array}{ll}
\|w(.+y_n)\|_{\epsilon,y_n} - \|\tilde{v}_n\|_{\epsilon,y_n} & \geq  \displaystyle \int_{\mathbb{R}^N}f(\tilde{v}_n)(w(.+y_n) - \tilde{v}_n)dx\\
&  \displaystyle - \tau_n \left(\int_{\mathbb{R}^N}|D(w(\cdot + y_n) - v_n)| + \int_{\mathbb{R}^N}V(\epsilon x + \epsilon y_n)|w(.+y_n) - \tilde{v}_n|dx \right),
\end{array}
\label{eqsolutiontranslation}
\end{equation}
For $\varphi_R := \varphi(\cdot/R)$, where $\varphi \in C^\infty_0(\mathbb{R}^N)$, $0 \leq \varphi \leq 1$, $\varphi \equiv 1$ in $B_R(0)$ and $\varphi \equiv 0$ in $B_{2R}(0)^c$, by taking in (\ref{eqsolutiontranslation}) $w(x) = v_n(x) + t\varphi_R(x-y_n)v_n(x)$ and making $t \to 0$, it follows that

\begin{equation}
\Phi_{\epsilon,y_n}'(\tilde{v}_n)(\varphi_R \tilde{v}_n) = o_n(1).
\label{Phiynon1}
\end{equation}
From (\ref{Phiynon1}), proceeding as in (\ref{eqv}) and taking into account that $|y_n| \to +\infty$, we get that
\begin{equation}
\int_{\mathbb{R}^N}|D\tilde{v}| + \int_{\mathbb{R}^N}V_\infty |\tilde{v}|dx \leq \int_{\mathbb{R}^N}f(\tilde{v})\tilde{v} dx,
\label{inequalityvtilde}
\end{equation}
which proves the claim.

By the Claim \ref{Claimunbounded} and since $\tilde{v} \neq 0$, it follows that there exists $\tilde{t} \in (0,1]$ such that $\tilde{t} v \in \mathcal{N}_\infty$. 
On the other hand, recalling that 
Note that
\begin{eqnarray*}
c_\epsilon + o_n(1) & = & \Phi_\epsilon(v_n) + o_n(1) \\
& = & \Phi_\epsilon(v_n) - \Phi'_\epsilon(v_n)v_n\\
& = & \int_{\mathbb{R}^N}\left(f(v_n)v_n - F(v_n)\right) dx\\
& = & \int_{\mathbb{R}^N}\left(f(\tilde{v}_n)\tilde{v}_n - F(\tilde{v}_n)\right) dx.
\end{eqnarray*}
the Fatou's Lemma gives 
\begin{eqnarray*}
c_\epsilon & \geq & \int_{\mathbb{R}^N}\left(f(\tilde{v})\tilde{v} - F(\tilde{v})\right) dx\\
& \geq & \int_{\mathbb{R}^N}\left(f(\tilde{t}\tilde{v})\tilde{t}\tilde{v} - F(\tilde{t}\tilde{v})\right) dx\\
& = & \Phi_\infty(\tilde{t}\tilde{v})\\
& \geq & c_\infty,
\end{eqnarray*}
which contradicts Corollary \ref{corolariocepsiloncinfty} when $\epsilon$ is small enough.

Then we can conclude that {\it Dichotomy} in fact does not happen and then, it follows that {\it Compactness} must hold.

\begin{claim}
$(y_n)$ in (\ref{compactness}) is a bounded sequence in $\mathbb{R}^N$.
\label{claimconcentration}
\end{claim}

Assuming this claim, for $\eta > 0$, there exists $R > 0$ such that, by (\ref{compactness}),
$$
\int_{B_R^c(0)}\rho_n dx < \eta, \quad \forall n \in \mathbb{N},
$$
which is equivalent to
\begin{equation}
\int_{B_R^c(0)}|v_n| dx \leq \eta|v_n|_1 \leq C\eta, \quad \forall n \in \mathbb{N}.
\label{decay1}
\end{equation}
Since $v_\epsilon \in L^1(\mathbb{R}^N)$, there exists $R_0 > 0$ such that
\begin{equation}
\int_{B_{R_0}^c(0)}|v_\epsilon| dx \leq \eta.
\label{decay2}
\end{equation}
Then, for $R_1 \geq \max\{R,R_0\}$, since $v_n \to v_\epsilon$ in $L^1(B_{R_1}(0))$, there exists $n_0 \in \mathbb{N}$ such that
\begin{equation}
\int_{B_{R_1}(0)}|v_n - v_\epsilon| dx \leq \eta \quad \forall n \geq n_0.
\label{decay3}
\end{equation}
Then, by (\ref{decay1}), (\ref{decay2}) and (\ref{decay3}), it follows that if $n \geq n_0$,
$$
\int_{\mathbb{R}^N}|v_n - v_\epsilon| dx \leq \eta + \int_{B_{R_1}^c(0)}|v_n - v_\epsilon| dx \leq \eta + \int_{B_{R_1}^c(0)}|v_n| dx + \int_{B_{R_1}^c(0)}|v_\epsilon| dx \leq C_1\eta.
$$
Hence $v_n \to v_\epsilon$ in $L^1(\mathbb{R}^N)$ and since $(v_n)$ is bounded in $L^{1^*}(\mathbb{R}^N)$, by interpolation inequality it follows that
$$
v_n \to v_\epsilon \quad \mbox{in $L^q(\mathbb{R}^N)$, for all $1 \leq q < 1^*$.}
$$

Now, what is left is proving Claim \ref{claimconcentration}. However, the proof of it consist in suppose by contradiction that, up to a subsequence, $|y_n| \to +\infty$ and then proceed as in the case of {\it Dichotomy}, where $(y_n)$ were unbounded, reaching that $c_\epsilon \geq c_\infty$. But the latter is a clear contradiction when $\epsilon < \epsilon_0$, in the light of Corollary \ref{corolariocepsiloncinfty}. 

\end{itemize}
\end{proof}

Now let us just remark that if $\epsilon < \epsilon_0$, then $v_\epsilon$ is in fact a nontrivial solution of (\ref{Pintrov}). First of all note that (\ref{convergence}), $(f_2)$ and $(f_3)$ implies that
\begin{equation}
\int_{\mathbb{R}^N}f(v_n)v_ndx \to \int_{\mathbb{R}^N}f(v_\epsilon)v_\epsilon dx, \quad \mbox{as $n \to +\infty$.}
\label{convergencef1}
\end{equation}

Then from (\ref{PSepsilon}), (\ref{convergencef1}) and the lower semicontinuity of $\|\cdot \|_\epsilon$ w.r.t. the $L^1(\mathbb{R}^N)$ convergence imply that
\begin{equation}
\|w\|_\epsilon - \|v_\epsilon\|_\epsilon \geq \int_{\mathbb{R}^N}f(v_\epsilon)(w - v_\epsilon)dx, \quad \forall w \in BV(\mathbb{R}^N),
\label{solutionepsilon}
\end{equation}
and then $v_\epsilon$ is in fact a nontrivial solution of (\ref{Pintrov}). Moreover, note that from (\ref{Phivn})
\begin{eqnarray*}
c_\epsilon & \leq & \Phi_\epsilon (v_\epsilon)\\
& = & \Phi_\epsilon (v_\epsilon) - \Phi_\epsilon'(v_\epsilon)v_\epsilon\\
& = & \int_{\mathbb{R}^N}\left(f(v_\epsilon)v_\epsilon - F(v_\epsilon)\right)dx\\
& \leq & \liminf_{n \to \infty}\int_{\mathbb{R}^N}\left(f(v_n)v_n - F(v_n)\right)dx\\
& = & \Phi_\epsilon(v_n) + o_n(1)\\
& = & c_\epsilon.
\end{eqnarray*}
Then $v_\epsilon$ is a ground-state solution of (\ref{Pintrov}) and consequently $u_\epsilon = v_\epsilon(\cdot/\epsilon)$ is a ground-state bounded variation solution of (\ref{Pintro}).

\subsection{Concentration behavior}

In the last section we have proved that for each $\epsilon \in (0, \epsilon_0)$, there exists $v_\epsilon \in BV(\mathbb{R}^N)$ solution of (\ref{Pintrov}) such that $\Phi_\epsilon(v_\epsilon) = c_\epsilon$. Now let us show that this sequence of solutions concentrate around a global minimum of $V$. Before it, let us state and prove some preliminaries lemmas.

\begin{lemma}
There exist $\{y_\epsilon\}_{\epsilon > 0} \subset \mathbb{R}^N$ and $R, \delta > 0$ such that
$$
\liminf_{\epsilon \to 0}\int_{B_R(y_\epsilon)}|v_\epsilon| dx \geq \delta > 0.
$$
\label{lemalions}
\end{lemma}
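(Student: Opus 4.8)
The plan is to argue by contradiction: if no such family $\{y_\epsilon\}$, $R$, $\delta$ exists, then in particular $v_\epsilon$ cannot carry any fixed amount of $L^1$-mass on balls of fixed radius, and I would like to upgrade this to a genuine vanishing statement that forces $v_\epsilon \to 0$ strongly, contradicting $\Phi_\epsilon(v_\epsilon)=c_\epsilon > 0$. Concretely, I would suppose that for every choice of $R > 0$ one has $\liminf_{\epsilon \to 0}\sup_{y \in \mathbb{R}^N}\int_{B_R(y)}|v_\epsilon|\,dx = 0$. Passing to a sequence $\epsilon_n \to 0$ along which the supremum actually tends to $0$ (for each $R$, using a diagonal argument over $R \in \mathbb{N}$), the sequence $(v_{\epsilon_n})$ is bounded in $BV(\mathbb{R}^N)$ by Lemma~\ref{lemmabounded}-type estimates (the same energy computation giving $C\|v_\epsilon\|_\epsilon \leq c_\epsilon$, and $c_\epsilon$ is bounded above near $0$ by Lemma~\ref{lemmac_0}), and it satisfies the vanishing hypothesis of the Lions-type lemma \cite[Theorem 1.1]{FigueiredoPimenta2}. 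Hence $v_{\epsilon_n} \to 0$ in $L^q(\mathbb{R}^N)$ for all $q \in [1,1^*)$.

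Next I would feed this into the Nehari identity. Taking $w = v_{\epsilon_n} + t v_{\epsilon_n}$ in \eqref{solutionepsilon} (the solution inequality for $v_\epsilon$, valid since by the previous subsection $v_\epsilon$ is a genuine solution and $\Phi_\epsilon(v_\epsilon)=c_\epsilon$) and letting $t \to 0^{\pm}$ gives $\Phi_\epsilon'(v_\epsilon)v_\epsilon = 0$, that is,
\begin{equation*}
\|v_{\epsilon_n}\|_{\epsilon_n} = \int_{\mathbb{R}^N}f(v_{\epsilon_n})v_{\epsilon_n}\,dx.
\end{equation*}
By $(f_2)$ and $(f_3)$, for any $\mu > 0$ there is $C_\mu$ with $|f(s)s| \leq \mu|s| + C_\mu|s|^{p}$, so the right-hand side is bounded by $\mu|v_{\epsilon_n}|_1 + C_\mu|v_{\epsilon_n}|_p^{p}$, which tends to $0$ as $n \to \infty$ (first by $L^q$-vanishing, then sending $\mu \to 0$). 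Therefore $\|v_{\epsilon_n}\|_{\epsilon_n} \to 0$, i.e. $v_{\epsilon_n} \to 0$ in $BV(\mathbb{R}^N)$. But then $c_{\epsilon_n} = \Phi_{\epsilon_n}(v_{\epsilon_n}) = \Phi_{\epsilon_n}(v_{\epsilon_n}) - \Phi_{\epsilon_n}'(v_{\epsilon_n})v_{\epsilon_n} = \int_{\mathbb{R}^N}\bigl(f(v_{\epsilon_n})v_{\epsilon_n} - F(v_{\epsilon_n})\bigr)dx \to 0$, contradicting $c_{\epsilon_n} \geq c_0 > 0$ (the lower bound $c_0 \leq c_\epsilon$ from the monotonicity of the functionals in the potential, as used in Lemma~\ref{lemmac_0}). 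This contradiction shows the vanishing hypothesis fails, so there exist $R > 0$, a sequence $\epsilon_n \to 0$, points $y_n \in \mathbb{R}^N$ and $\delta > 0$ with $\int_{B_R(y_n)}|v_{\epsilon_n}|\,dx \geq \delta$; relabelling $y_{\epsilon_n} := y_n$ and extracting once more gives the stated $\liminf$.

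The main obstacle I anticipate is the first step: converting the negation of the lemma's conclusion — which only says that no single fixed triple $(y_\epsilon, R, \delta)$ works — into the clean vanishing statement $\sup_y \int_{B_R(y)}|v_\epsilon| \to 0$ for every $R$ along a common sequence. One must be careful that the negation allows the "bad" radius to depend on $\epsilon$ or on which subsequence one takes; the remedy is the diagonal extraction over $R \in \mathbb{N}$ together with the monotonicity of $R \mapsto \sup_y \int_{B_R(y)}|v_\epsilon|$, so that controlling integer radii controls all radii. A secondary technical point is that $v_\epsilon$ is not obtained as a $C^1$ critical point, so the Nehari-type identity must be extracted from the variational inequality \eqref{solutionepsilon} rather than from a gradient equation — but this is exactly the manipulation $w = (1\pm t)v_\epsilon$, $t \to 0$, already used earlier in the excerpt, so it presents no real difficulty. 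Everything else is a routine application of the compactness/vanishing dichotomy machinery already set up in Proposition~\ref{propositionconvergence}.
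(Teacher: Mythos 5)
Your proposal is correct and follows essentially the same route as the paper: argue by contradiction, invoke the Lions-type vanishing lemma of \cite[Theorem 1.1]{FigueiredoPimenta2} to get $v_\epsilon \to 0$ in $L^q(\mathbb{R}^N)$ for $1\le q<1^*$, extract the Nehari identity $\|v_\epsilon\|_\epsilon=\int_{\mathbb{R}^N}f(v_\epsilon)v_\epsilon\,dx$ by testing the solution inequality with $w=(1\pm t)v_\epsilon$, and conclude $c_\epsilon=\Phi_\epsilon(v_\epsilon)=o_\epsilon(1)$, contradicting the positive lower bound on $c_\epsilon$ from Lemma~\ref{lemmac_0}. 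Your extra care about converting the negation of the conclusion into the vanishing hypothesis (diagonal extraction over integer radii) is a detail the paper passes over silently, but it does not change the argument.
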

\begin{proof}
In fact, on the contrary, thanks to \cite[Theorem 1]{FigueiredoPimenta2}, it follows that $v_\epsilon \to 0$ in $L^q(\mathbb{R}^N)$ for all $1 \leq q < 1^*$, as $\epsilon \to 0$. Then, by $(f_2)$, $(f_3)$ and the Lebesgue Convergence Theorem, it follows that 
$$
\displaystyle \int_{\mathbb{R}^N}f(v_\epsilon)v_\epsilon dx = o_\epsilon(1).
$$ 

Taking $w = v_\epsilon \pm t v_\epsilon$ in (\ref{eqsolution}) and passing to the limit as $t \to 0^+$, it follows that 
$$
\|v_\epsilon\|_\epsilon = \int_{\mathbb{R}^N}f(v_\epsilon)v_\epsilon dx = o_\epsilon(1),
$$
which implies that $c_\epsilon = \Phi_\epsilon(v_\epsilon) = o_\epsilon(1)$, leading to a contradiction with Lemma \ref{lemmac_0}.
\end{proof}

\begin{lemma}
The set $\{\epsilon \, y_\epsilon\}_{\epsilon > 0}$ is bounded in $\mathbb{R}^N$.
\label{yepsilonlimitada}
\end{lemma}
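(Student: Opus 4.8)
The plan is to argue by contradiction: suppose that along some sequence $\epsilon_n \to 0$ one has $|\epsilon_n y_{\epsilon_n}| \to +\infty$, and derive the inequality $c_0 \ge c_\infty$, which contradicts \eqref{c_0c_infty}. This is the same mechanism already used twice inside the proof of Proposition \ref{propositionconvergence} (the ``$(y_n)$ unbounded'' branches), the difference being that now the shift parameter is $\epsilon_n y_{\epsilon_n}$ rather than $y_n$, and the relevant limiting potential is evaluated at $V(\epsilon_n x + \epsilon_n y_{\epsilon_n})$, which converges (using $(V_2)$ and $|\epsilon_n y_{\epsilon_n}| \to \infty$) to a constant $\ge V_\infty$ pointwise.

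First I would introduce the translated functions $\tilde v_n(x) := v_{\epsilon_n}(x + y_{\epsilon_n})$. Since $\|v_{\epsilon_n}\|_{BV(\mathbb{R}^N)} = \|\tilde v_n\|_{BV(\mathbb{R}^N)}$ and the family $(v_{\epsilon_n})$ is bounded in $BV(\mathbb{R}^N)$ (this boundedness follows, as in Lemma \ref{lemmabounded}, from $(f_4)$ together with $c_{\epsilon_n} \to c_0$ via Lemma \ref{lemmac_0}), the sequence $(\tilde v_n)$ is bounded and hence, up to a subsequence, converges in $L^1_{loc}(\mathbb{R}^N)$ and a.e. to some $\tilde v \in BV(\mathbb{R}^N)$. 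By Lemma \ref{lemalions}, $\int_{B_R(y_{\epsilon_n})}|v_{\epsilon_n}|\,dx \ge \delta > 0$ for small $\epsilon_n$, so $\int_{B_R(0)}|\tilde v_n|\,dx \ge \delta$, and passing to the limit using the local $L^1$ convergence gives $\int_{B_R(0)}|\tilde v|\,dx \ge \delta > 0$; in particular $\tilde v \not\equiv 0$.

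Next I would show $\Phi_\infty'(\tilde v)\tilde v \le 0$. Here I mirror the computation leading to \eqref{inequalityvtilde}: write the solution inequality \eqref{eqsolution} for $v_{\epsilon_n}$ (now with parameter $\epsilon_n$), change variables $x \mapsto x + y_{\epsilon_n}$ so that the potential becomes $V(\epsilon_n x + \epsilon_n y_{\epsilon_n})$, test with $w = v_{\epsilon_n} + t\,\varphi_R(\cdot - y_{\epsilon_n})v_{\epsilon_n}$ and let $t \to 0^+$ to obtain $\Phi_{\epsilon_n, y_{\epsilon_n}}'(\tilde v_n)(\varphi_R \tilde v_n) = o_n(1)$ in the notation of \eqref{Phiynderivada}–\eqref{Phiynon1} (with $\tau_n$ there replaced by $0$, since $v_{\epsilon_n}$ is an exact solution). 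Then, exactly as in the passage \eqref{eqv}–\eqref{inequalityv}, use lower semicontinuity of the total variation on balls, the fact that $|\nabla \varphi_R| \le C/R$ controls the gradient-cross term, the a.e. convergence $\tilde v_n \to \tilde v$, Fatou's lemma on the nonlinear term, and finally $V(\epsilon_n x + \epsilon_n y_{\epsilon_n}) \to c \ge V_\infty$ a.e. (by $(V_2)$) to send first $n \to \infty$ and then $R \to \infty$, arriving at
\begin{equation*}
\int_{\mathbb{R}^N}|D\tilde v| + \int_{\mathbb{R}^N} V_\infty |\tilde v|\,dx \le \int_{\mathbb{R}^N} f(\tilde v)\tilde v\,dx,
\end{equation*}
i.e. $\Phi_\infty'(\tilde v)\tilde v \le 0$. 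Since $\tilde v \ne 0$, by the Nehari property there is $\tilde t \in (0,1]$ with $\tilde t\,\tilde v \in \mathcal{N}_\infty$. Finally, from $c_{\epsilon_n} = \Phi_{\epsilon_n}(v_{\epsilon_n}) = \Phi_{\epsilon_n}(v_{\epsilon_n}) - \Phi_{\epsilon_n}'(v_{\epsilon_n})v_{\epsilon_n} = \int_{\mathbb{R}^N}\big(\tfrac{1}{\theta}f(v_{\epsilon_n})v_{\epsilon_n} - F(v_{\epsilon_n})\big)\,dx + (1-\tfrac1\theta)\int_{\mathbb{R}^N} f(v_{\epsilon_n})v_{\epsilon_n}\,dx$ rewritten as $\int_{\mathbb{R}^N}(f(v_{\epsilon_n})v_{\epsilon_n} - F(v_{\epsilon_n}))\,dx$, together with translation invariance of this integral and Fatou's lemma, and then $(f_4)$ plus $(f_5)$ as in the chain of inequalities after \eqref{Claimunbounded}, I get
\begin{equation*}
c_0 = \lim_{n\to\infty} c_{\epsilon_n} \ge \int_{\mathbb{R}^N}\big(f(\tilde v)\tilde v - F(\tilde v)\big)\,dx \ge \int_{\mathbb{R}^N}\big(f(\tilde t\tilde v)\tilde t\tilde v - F(\tilde t\tilde v)\big)\,dx = \Phi_\infty(\tilde t\tilde v) \ge c_\infty,
\end{equation*}
contradicting $c_0 < c_\infty$. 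Hence $\{\epsilon\, y_\epsilon\}_{\epsilon>0}$ is bounded.

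The main obstacle is the same delicate point that recurs throughout Section \ref{sectionexistence}: justifying that the gradient-type directional derivative $\Phi_{\epsilon_n,y_{\epsilon_n}}'(\tilde v_n)(\varphi_R\tilde v_n)$ is well defined (this needs \eqref{derivadafuncaoteste}, the absolute continuity of $(D(\varphi_R \tilde v_n))^s$ w.r.t.\ $(D\tilde v_n)^s$, and that $\varphi_R\tilde v_n$ vanishes a.e.\ where $\tilde v_n$ does) and controlling the singular part of the total variation in the limit — one only gets lower semicontinuity of $\int_{B_R}|D\cdot|$, not convergence, so all estimates must be arranged as one-sided inequalities in the ``correct'' direction, and the cross term $\int \frac{\tilde v_n (D\tilde v_n)^a\cdot\nabla\varphi_R}{|(D\tilde v_n)^a|}\,dx$ must be absorbed by sending $R\to\infty$ using $\|\tilde v_n\|_{L^1}$ boundedness. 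The use of $(V_2)$ (rather than a limit) is enough here because we only need $\liminf_{|x|\to\infty} V \ge V_\infty$... actually $(V_2)$ gives $V_\infty = \liminf_{|x|\to\infty}V$, so $V(\epsilon_n x + \epsilon_n y_{\epsilon_n}) \ge V_\infty - o(1)$ for the relevant $x$, which suffices to pass to the limit in the potential term by Fatou.
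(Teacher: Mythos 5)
Your proposal is correct and follows essentially the same route as the paper: argue by contradiction assuming $|\epsilon_n y_{\epsilon_n}|\to\infty$, translate to $\tilde v_n$, use Lemma \ref{lemalions} to get a nontrivial limit $\tilde v$, establish $\Phi_\infty'(\tilde v)\tilde v\le 0$ exactly as in Claim \ref{Claimunbounded}, project onto $\mathcal{N}_\infty$, and conclude $c_0=\lim c_{\epsilon_n}\ge c_\infty$, contradicting (\ref{c_0c_infty}). The only differences are expository: you spell out the details the paper delegates to the proof of Proposition \ref{propositionconvergence}, including the correct observation that $\tau_n$ may be taken to be $0$ since $v_{\epsilon_n}$ is an exact critical point, and that $(V_2)$'s $\liminf$ suffices via Fatou.
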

\begin{proof}
Suppose by contradiction that there exist $\epsilon_n \to 0$, such that $|\epsilon_n y_n| \to \infty$, as $n \to \infty$, where $y_n:= y_{\epsilon_n}$. In the following we proceed as in the proof of Claim \ref{Claimunbounded} of Proposition \ref{propositionconvergence}. Let $v_n :=v_{\epsilon_n}$, and note that, if $\varphi_R$ is like in the proof of such claim, it follows that
$$
\Phi'_{\epsilon_n,y_n}(\tilde{v}_n)(\varphi_R \tilde{v}_n) = 0,
$$
where $\tilde{v}_n := v_n(\cdot - y_n)$. As $(v_n)$, $(\tilde{v}_n)$ is bounded in $BV(\mathbb{R}^N)$ and then $\tilde{v}_n \to \tilde{v}$ in $L_{loc}^1(\mathbb{R}^N)$, up to a subsequence, where $\tilde{v} \neq 0$ by Lemma \ref{lemalions}. Then, as before, we get that
$$
\Phi_\infty'(\tilde{v})\tilde{v} \leq 0
$$
and then there exists $\tilde{t} \in (0,1]$ such that $\tilde{t}\tilde{v} \in \mathcal{N}_\infty$. Hence in the same way that in Claim \ref{Claimunbounded} this will lead us to the contradiction that $c_0 = \lim_{n \to \infty}c_{\epsilon_n} \geq c_\infty$.
\end{proof}

\begin{corollary}
If $\epsilon_n \to 0$, then up to a subsequence, $\epsilon_n y_n \to y^*$ where
$$
V(y^*) = V_0 = \inf_{\mathbb{R}^N}V.
$$
\end{corollary}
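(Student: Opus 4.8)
The plan is to combine the boundedness of $\{\epsilon_n y_n\}$ furnished by Lemma \ref{yepsilonlimitada} with a comparison between the translated solutions and the mountain pass level of the autonomous problem whose potential is the constant $V(y^*)$. By Lemma \ref{yepsilonlimitada}, $\{\epsilon_n y_n\}$ is bounded, so along a subsequence $\epsilon_n y_n \to y^*$ for some $y^* \in \mathbb{R}^N$; since $V_0 = \inf_{\mathbb{R}^N}V$ we trivially have $V(y^*) \geq V_0$, and the whole point is to rule out the strict inequality.

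Write $v_n := v_{\epsilon_n}$, $y_n := y_{\epsilon_n}$ and set $\tilde v_n := v_n(\cdot - y_n)$. As in the proof of Lemma \ref{yepsilonlimitada}, $(\tilde v_n)$ is bounded in $BV(\mathbb{R}^N)$, hence along a subsequence $\tilde v_n \to \tilde v$ in $L^1_{loc}(\mathbb{R}^N)$ and a.e., with $\tilde v \neq 0$ by Lemma \ref{lemalions} and $\tilde v \in BV(\mathbb{R}^N)$ by lower semicontinuity; moreover, inserting $w = v_n + t\varphi_R(\cdot - y_n)v_n$ into (\ref{eqsolution}) and letting $t \to 0$ gives $\Phi'_{\epsilon_n,y_n}(\tilde v_n)(\varphi_R\tilde v_n) = 0$. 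Arguing now as in (\ref{eqv})--(\ref{inequalityvtilde}), but using that $\epsilon_n y_n \to y^*$ so that $V(\epsilon_n x + \epsilon_n y_n) \to V(y^*)$ uniformly on bounded sets, and then letting $R \to +\infty$, one obtains
\[
\int_{\mathbb{R}^N}|D\tilde v| + \int_{\mathbb{R}^N}V(y^*)|\tilde v|\,dx \leq \int_{\mathbb{R}^N}f(\tilde v)\tilde v\,dx,
\]
i.e. $\tilde v$ satisfies the Nehari-type inequality for the autonomous functional $\Phi_{V(y^*)}$ (defined like $\Phi_0$, with $V_0$ replaced by $V(y^*)$). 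Since $\tilde v \neq 0$, there exists $\tilde t \in (0,1]$ with $\tilde t\tilde v \in \mathcal{N}_{V(y^*)}$.

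Next, exactly as in (\ref{convergenciaf}), from $\Phi_{\epsilon_n}(v_n) = c_{\epsilon_n}$, $\Phi'_{\epsilon_n}(v_n)v_n = 0$ and the translation invariance of the nonlinear term, $c_{\epsilon_n} = \int_{\mathbb{R}^N}\big(f(\tilde v_n)\tilde v_n - F(\tilde v_n)\big)dx$. Applying Fatou's Lemma (the integrand is nonnegative by $(f_4)$), Lemma \ref{lemmac_0}, and the monotonicity in $t$ of $t \mapsto f(t\tilde v)t\tilde v - F(t\tilde v)$ together with $\tilde t \leq 1$ (as in Proposition \ref{propositionconvergence}), we get
\[
c_0 = \lim_{n\to\infty}c_{\epsilon_n} \geq \int_{\mathbb{R}^N}\big(f(\tilde v)\tilde v - F(\tilde v)\big)dx \geq \int_{\mathbb{R}^N}\big(f(\tilde t\tilde v)\tilde t\tilde v - F(\tilde t\tilde v)\big)dx = \Phi_{V(y^*)}(\tilde t\tilde v) \geq c_{V(y^*)}.
\]
On the other hand $V(y^*) \geq V_0$ gives $\Phi_0 \leq \Phi_{V(y^*)}$, and by the monotonicity of the mountain pass level with respect to the potential — the same argument used for (\ref{c_0c_infty}) — one has $c_0 \leq c_{V(y^*)}$, with strict inequality whenever $V(y^*) > V_0$. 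This contradicts the previous display unless $V(y^*) = V_0$, which is the claim; in particular $\inf_{\mathbb{R}^N}V$ is attained at $y^*$.

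The main obstacle is the inequality $\int_{\mathbb{R}^N}|D\tilde v| + \int_{\mathbb{R}^N}V(y^*)|\tilde v|dx \leq \int_{\mathbb{R}^N}f(\tilde v)\tilde v\,dx$: because $\Phi_{\epsilon_n}$ is non-smooth one must test with the truncations $\varphi_R\tilde v_n$, show that the gradient-of-cutoff term is negligible as $R \to +\infty$ (using the uniform $BV$ bound on $(\tilde v_n)$), and — crucially here — pass to the limit in the term carrying the variable potential $V(\epsilon_n\,\cdot + \epsilon_n y_n)$, which is precisely where it matters that $\epsilon_n \to 0$ while $\epsilon_n y_n$ stays bounded, forcing uniform convergence to the constant $V(y^*)$ on compact sets. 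Everything else merely reproduces computations already carried out in Proposition \ref{propositionconvergence} and Lemma \ref{yepsilonlimitada}.
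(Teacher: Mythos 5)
Your argument is correct and is essentially the paper's own proof, which likewise extracts a convergent subsequence $\epsilon_n y_n \to y^*$ from Lemma \ref{yepsilonlimitada} and then establishes the chain $c_0 = \lim_n c_{\epsilon_n} \geq c_{V(y^*)} \geq c_0$ by repeating the translation/Fatou/Nehari-projection argument of Claim \ref{Claimunbounded} and Lemma \ref{yepsilonlimitada}, concluding $V(y^*)=V_0$ from the strict monotonicity of the mountain pass level in the potential. The paper only sketches these steps by reference; you have simply written them out in full, including the one point that genuinely needs care (passing to the limit in the term with $V(\epsilon_n\,\cdot+\epsilon_n y_n)$).
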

\begin{proof}
If $\epsilon_n \to 0$, since by Lemma \ref{yepsilonlimitada} $(\epsilon_n y_n)_{n \in \mathbb{N}}$ is bounded, then $\epsilon_n y_n \to y^* \in \mathbb{R}^N$ up to a subsequence. As in the proof of Claim \ref{Claimunbounded} of Proposition \ref{propositionconvergence} and of Lemma \ref{yepsilonlimitada}, it is possible to prove that 
$$
c_0 = \lim_{n \to \infty}c_{\epsilon_n} \geq c_{V(y^*)} \geq c_0,
$$
where $c_{V(y^*)}$ is the mountain pass minimax level of problem (\ref{Pintrov}) with $V(y^*)$ playing the role of $V(\epsilon x)$. Then it follows that $V(y^*) = \inf_{\mathbb{R}^N}V$.
\end{proof}

\begin{lemma}
If $\epsilon_n \to 0$, then there exists $\tilde{v} \in BV(\mathbb{R}^N)$ such that
\begin{equation}
\tilde{v}_n:= v_n(\cdot - y_n) \to \tilde{v} \quad \mbox{in $L^1_{loc}(\mathbb{R}^N)$}
\end{equation}
and
\begin{equation}
f(\tilde{v}_n)\tilde{v}_n \to f(\tilde{v})\tilde{v} \quad \mbox{in $L^1(\mathbb{R}^N)$}.
\label{convergencef}
\end{equation}
\end{lemma}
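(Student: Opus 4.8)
The plan is to adapt, almost verbatim, the \emph{Compactness} part of the proof of Proposition~\ref{propositionconvergence} (the case of a bounded concentration sequence), with the autonomous functional $\Phi_0$ now playing the role of $\Phi_\epsilon$: after translating by $y_n$ one has $V(\epsilon_n x + \epsilon_n y_n) \to V_0$ on compact sets, since $\epsilon_n y_n \to y^\ast$ with $V(y^\ast)=V_0$ by the preceding corollary. First I would record the compactness. Translations are isometries of the $BV$ norm, so $(\tilde v_n)$ is bounded in $BV(\mathbb{R}^N)$; hence, up to a subsequence, $\tilde v_n \to \tilde v$ in $L^1_{loc}(\mathbb{R}^N)$ and a.e.\ in $\mathbb{R}^N$, and lower semicontinuity of the $BV$-seminorm on balls together with the uniform bound gives $\tilde v \in BV(\mathbb{R}^N)$, exactly as in (\ref{vBV}). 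Moreover $\tilde v \neq 0$: by Lemma~\ref{lemalions}, $\int_{B_R(0)}|\tilde v_n|\,dx = \int_{B_R(y_n)}|v_n|\,dx \ge \delta/2$ for $n$ large, and passing to the $L^1(B_R(0))$ limit yields $\int_{B_R(0)}|\tilde v|\,dx>0$. This proves the first convergence in the statement.

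The heart of the matter, and what I expect to be the main obstacle, is the inequality $\Phi_0'(\tilde v)\tilde v \le 0$, which can only be reached through a truncation and lower-semicontinuity scheme because $\Phi_0$ is not differentiable. Following Claim~\ref{derivadanegativa} and Claim~\ref{Claimunbounded}, I would fix $\varphi_R := \varphi(\cdot/R)$ with $\varphi \in C^\infty_0(\mathbb{R}^N)$, $0\le\varphi\le1$, $\varphi\equiv1$ on $B_R(0)$, $\varphi\equiv0$ off $B_{2R}(0)$, observe that $\varphi_R\tilde v_n$ is an admissible test direction against $\tilde v_n$ (since $(D(\varphi_R\tilde v_n))^s \ll (D\tilde v_n)^s$ and $\varphi_R\tilde v_n$ vanishes where $\tilde v_n$ does), and use that each $v_n$ is an exact solution of (\ref{eqsolution}) to get $\Phi_{\epsilon_n,y_n}'(\tilde v_n)(\varphi_R\tilde v_n)=0$. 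Expanding this via (\ref{Phiynderivada}), combining the absolutely continuous and singular parts of $|D\tilde v_n|$ into $\int\varphi_R|D\tilde v_n|$, using the lower semicontinuity of $v\mapsto\int_{B_R(0)}|Dv|$ with respect to $L^1(B_R(0))$ convergence, keeping the gradient cross term $\int \frac{\tilde v_n (D\tilde v_n)^a\cdot\nabla\varphi_R}{|(D\tilde v_n)^a|}\,dx$ (bounded by $\frac{C}{R}\|\tilde v_n\|_{L^1}$) under a $\liminf$, invoking $V(\epsilon_n x+\epsilon_n y_n)\to V_0$ uniformly on $B_{2R}(0)$, and applying Fatou to the $f$-term, I would obtain $\int_{B_R(0)}|D\tilde v| + V_0\int_{\mathbb{R}^N}\varphi_R|\tilde v|\,dx \le \int_{\mathbb{R}^N} f(\tilde v)\tilde v\,\varphi_R\,dx$; letting $R\to\infty$ (the cross term then disappears) yields $\int|D\tilde v| + V_0\int|\tilde v| \le \int f(\tilde v)\tilde v$, that is, $\Phi_0'(\tilde v)\tilde v\le 0$.

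Once this inequality holds, since $\tilde v\neq0$ the fibering map $t\mapsto\Phi_0(t\tilde v)$ produces $\tilde t\in(0,1]$ with $\tilde t\tilde v\in\mathcal N_0$, using $(f_2)$--$(f_5)$ as in Section~\ref{sectionexistence}. To close, I would use the energy identity: each $v_n$ being a ground state, $c_{\epsilon_n}=\Phi_{\epsilon_n}(v_n)-\Phi_{\epsilon_n}'(v_n)v_n=\int_{\mathbb{R}^N}\big(f(v_n)v_n-F(v_n)\big)dx=\int_{\mathbb{R}^N}\big(f(\tilde v_n)\tilde v_n-F(\tilde v_n)\big)dx$, while $c_{\epsilon_n}\to c_0$ by Lemma~\ref{lemmac_0}. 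Fatou's lemma and the monotonicity of $s\mapsto f(s)s-F(s)$ in $|s|$ (a consequence of $(f_1)$ and $(f_5)$, since $\frac{d}{ds}(f(s)s-F(s))=f'(s)s$) give
$$
c_0 \;\ge\; \int_{\mathbb{R}^N}\big(f(\tilde v)\tilde v-F(\tilde v)\big)dx \;\ge\; \int_{\mathbb{R}^N}\big(f(\tilde t\tilde v)\tilde t\tilde v-F(\tilde t\tilde v)\big)dx \;=\;\Phi_0(\tilde t\tilde v)\;\ge\; c_0,
$$
so every inequality is an equality; in particular $\int_{\mathbb{R}^N}\big(f(\tilde v_n)\tilde v_n-F(\tilde v_n)\big)dx\to\int_{\mathbb{R}^N}\big(f(\tilde v)\tilde v-F(\tilde v)\big)dx$. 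Finally I would upgrade this to $L^1$-convergence: by $(f_4)$ the integrands $g_n:=f(\tilde v_n)\tilde v_n-F(\tilde v_n)\ge(\theta-1)F(\tilde v_n)\ge0$ converge a.e.\ to $g:=f(\tilde v)\tilde v-F(\tilde v)$ with $\int g_n\to\int g<\infty$, so Scheff\'e's lemma gives $g_n\to g$ in $L^1(\mathbb{R}^N)$; then $0\le F(\tilde v_n)\le\frac{1}{\theta-1}g_n$ and the generalized dominated convergence theorem give $F(\tilde v_n)\to F(\tilde v)$ in $L^1(\mathbb{R}^N)$; adding, $f(\tilde v_n)\tilde v_n=g_n+F(\tilde v_n)\to g+F(\tilde v)=f(\tilde v)\tilde v$ in $L^1(\mathbb{R}^N)$, which is (\ref{convergencef}).
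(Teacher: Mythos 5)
Your proposal is correct and follows essentially the same route as the paper: translation invariance and $L^1_{loc}$ compactness, the truncation/lower-semicontinuity argument giving $\Phi_0'(\tilde v)\tilde v\le 0$, projection onto $\mathcal{N}_0$, the Fatou sandwich $c_0\ge\int(f(\tilde v)\tilde v-F(\tilde v))\,dx\ge\Phi_0(\tilde t\tilde v)\ge c_0$ using $c_{\epsilon_n}\to c_0$, and finally the upgrade to $L^1$ convergence via $(f_4)$. You merely spell out details the paper delegates to earlier claims (notably the Scheff\'e-type step at the end), so no further comment is needed.
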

\begin{proof}
First of all, note that as in Lemma \ref{lemmabounded}, it is possible to prove that $(v_n)$ is a bounded sequence in $BV(\mathbb{R}^N)$ and then that $v_n \to \tilde{v}$ in $L^q_{loc}(\mathbb{R}^N)$ for all $1 \leq q < 1^*$, where $\tilde{v} \in BV(\mathbb{R}^N)$. As in the proof of Lemma \ref{yepsilonlimitada}, it is possible to prove that $\tilde{t} \in (0,1]$ such that $\tilde{t}\tilde{v} \in \mathcal{N}_{V(y^*)} = \mathcal{N}_{V_0}$ should verify $\tilde{t} = 1$. Hence $\tilde{v} \in \mathcal{N}_0$ and note that $\Phi_0(\tilde{v}) = c_0$. In fact
\begin{eqnarray*}
c_0 & \leq & \Phi_0(\tilde{v})\\
& = & \Phi_0(\tilde{v}) - \Phi_0'(\tilde{v})\tilde{v}\\
& = & \int_{\mathbb{R}^N}\left(f(\tilde{v})\tilde{v} - F(\tilde{v})\right) dx\\
& \leq & \liminf_{n \to \infty}\int_{\mathbb{R}^N}\left(f(\tilde{v}_n)\tilde{v_n} - F(\tilde{v}_n)\right) dx\\
& = & \lim_{n \to \infty}\left(\Phi_{\epsilon_n}(v_n) - \Phi_{\epsilon_n}'(v_n)v_n\right)\\
& = & \lim_{n \to \infty} c_{\epsilon_n}\\
& = & c_0.
\end{eqnarray*}
Then 
$$
\lim_{n \to \infty}\int_{\mathbb{R}^N}\left(f(\tilde{v}_n)\tilde{v_n} - F(\tilde{v}_n)\right) dx = \int_{\mathbb{R}^N}\left(f(\tilde{v})\tilde{v} - F(\tilde{v})\right) dx
$$
and hence $f(\tilde{v}_n)\tilde{v}_n - F(\tilde{v}_n) \to f(\tilde{v})\tilde{v} - F(\tilde{v})$ in $L^1(\mathbb{R}^N)$. Thereby, by $(f_4)$, 
$$
f(v_n)v_n \to f(\tilde{v})\tilde{v} \quad \mbox{in $L^1(\mathbb{R})$.}
$$

\end{proof}

As a consequence of the last result, we can finish the proof of Theorem \ref{theoremapplication1}, by proving (\ref{concentrationf}). In fact, if $\epsilon_n \to 0$, as $n \to \infty$, denoting $\displaystyle L = \int_{\mathbb{R}^N}f(\tilde{v})\tilde{v}dx$, for a given $\delta > 0$, by (\ref{convergencef}), there exists $R > 0$ and $n_0 \in \mathbb{N}$ such that, for $n \geq n_0$,
\begin{equation}
\int_{B_R^c(0)}f(\tilde{v}_n)\tilde{v}_ndx < \delta.
\label{estimatef1}
\end{equation}
From which it follows that
\begin{equation}
\int_{B_R(0)}f(\tilde{v}_n)\tilde{v}_ndx \geq L - \delta + o_n(1).
\label{estimatef2}
\end{equation}
By the change of variable $\tilde{v}_n(x) = u_n(\epsilon_n x + \epsilon_n y_n)$, (\ref{estimatef1}) and (\ref{estimatef2}) imply that
\begin{equation}
\int_{B_{\epsilon_nR}^c(\epsilon_n y_n)}f(u_n)u_n dx < \epsilon_n^N\delta
\label{estimatef3}
\end{equation}
and
\begin{equation}
\int_{B_{\epsilon_nR}(\epsilon_n y_n)}f(u_n)u_n dx \geq C \epsilon_n^N,
\label{estimatef4}
\end{equation}
for $n \geq n_0$, where $C > 0$.
Taking into account the fact that $\epsilon_n y_n \to x_0$ where $V(x_0) = V_0$, we can consider $\overline{R} > 0$ such that, for $n \geq n_0$, $B_R(\epsilon_n y_n) \subset B_{\overline{R}}(x_0)$. Then from (\ref{estimatef3}) and (\ref{estimatef4}),
$$
\int_{B_{\epsilon_n\overline{R}}^c(x_0)}f(u_n)u_n dx < \epsilon_n^N\delta
$$
and
$$
\int_{B_{\epsilon_n\overline{R}}(x_0)}f(u_n)u_n dx \geq C \epsilon_n^N,
$$
for $n \geq n_0$, what finishes de proof of Theorem \ref{theoremapplication1}.

\section{Existence of solutions in the asymptotic constant case}

In this section we prove Theorem \ref{theoremapplication2} and then we consider the assumptions $(f_1) - (f_5)$ and $(V_1)$ and $(V_3)$. As can be seen in the statement of Theorem \ref{theoremapplication2}, our existence result is independent of $\epsilon > 0$ and then we can suppose without lack of generality that $\epsilon = 1$. Then, in this section $\|\cdot\|_1$ denotes $\|\cdot\|_\epsilon$ when $\epsilon = 1$.

Let us define $\Phi: BV(\mathbb{R}^N) \to \mathbb{R}$ by
$$
\Phi(u) = \int_{\mathbb{R}^N}|Du| + \int_{\mathbb{R}^N}V(x)|u|dx - \int_{\mathbb{R}^N}F(u)dx
$$
and consider $\Phi_\infty$ like in Section 2.

By the conditions on $f$, as in the last section, it is easy to see that $\Phi$ and $\Phi_\infty$ satisfy the geometric conditions of the Mountain Pass Theorem and then it is well defined the minimax levels 
$$
c = \inf_{\gamma \in \Gamma}\sup_{t \in [0,1]}\Phi(\gamma(t)),
$$
$$
c_\infty = \inf_{\gamma \in \Gamma_\infty}\sup_{t \in [0,1]}\Phi_\infty(\gamma(t))
$$
where 
$$
\Gamma = \{\gamma \in C([0,1],BV(\mathbb{R}^N)); \, \gamma(0) = 0 \, \mbox{and} \, \Phi(\gamma(1)) < 0\}
$$ 
and 
$$
\Gamma_\infty = \{\gamma \in C([0,1],BV(\mathbb{R}^N); \, \gamma(0) = 0 \, \mbox{and} \, \Phi_\infty(\gamma(1)) < 0\}.
$$
By $(V_3)$, it is easy to see that $\Phi(u) \leq \Phi_\infty(u)$ for all $u \in BV(\mathbb{R}^N)$ and as a consequence, 
\begin{equation}
c \leq c_\infty.
\label{ccinfty}
\end{equation}

By the results in \cite{FigueiredoPimenta1, FigueiredoPimenta2} together with the arguments explored in Section 5, it follows that there exists a critical point of $\Phi_\infty$, $w_\infty \in BV(\mathbb{R}^N)$, such that $\Phi_\infty(w_\infty) = c_\infty$. Also by \cite{FigueiredoPimenta2}, it is possible to define the Nehari manifolds associated to $\Phi$ and $\Phi_\infty$, respectively by
$$
\mathcal{N} = \{v \in BV(\mathbb{R}^N)\backslash\{0\}; \, \Phi'(v)v = 0\}
$$
and
$$
\mathcal{N}_\infty = \{v \in BV(\mathbb{R}^N)\backslash\{0\}; \, \Phi_\infty'(v)v = 0\}.
$$

By the discussion in \cite{FigueiredoPimenta1} it follows that $c = \inf_{\mathcal{N}}\Phi$ and $c_\infty = \inf_{\mathcal{N}_\infty}\Phi_\infty$. Moreover, it has been proved there that if there exists $u_0 \in BV(\mathbb{R}^N)$ such that $\Phi(u_0) =  \inf_{\mathcal{N}}\Phi$, then $u_0$ is a bounded variation solution of (\ref{Pintro}).

In order to effectively start with the proof of Theorem \ref{theoremapplication2} let us consider the two possible cases about $c$ and $c_\infty$.

\begin{itemize}
\item {\bf Case 1: $c = c_\infty$}.  If this situation occurs, problem (\ref{Pintro}) has a ground state solution. In fact, since $w_\infty \in \mathcal{N}_\infty$, then
$$
\int_{\mathbb{R}^N}|Dw_\infty| + \int_{\mathbb{R}^N}V(x)|w_\infty|dx \leq \int_{\mathbb{R}^N}|Dw_\infty| + \int_{\mathbb{R}^N}V_\infty|w_\infty|dx = \int_{\mathbb{R}^N}f(w_\infty)w_\infty dx,
$$
i.e.
$$
\Phi'(w_\infty)w_\infty \leq 0.
$$
Then there exists $t \in (0,1]$ such that $tw_\infty \in \mathcal{N}$. Hence, by $(f_5)$, 
\begin{eqnarray*}
c & \leq & \Phi(tw_\infty)\\
& = & \Phi(tw_\infty) - \Phi'(tw_\infty)tw_\infty\\
& = & \int_{\mathbb{R}^N}\left(f(tw_\infty)tw_\infty - F(t w_\infty)\right) dx\\
& \leq & \int_{\mathbb{R}^N}\left(f(w_\infty)w_\infty - F( w_\infty)\right) dx\\
& = & \Phi_\infty(w_\infty)\\
& = & c_\infty\\
& = & c.
\end{eqnarray*}
This means that $t=1$ and $w_\infty$ is also a minimizer of $\Phi$ on $\mathcal{N}$ and then is a ground-state bounded variation solution of (\ref{Pintro}).

\item {\bf Case 2: $c < c_\infty$.} By \cite[Theorem 4]{FigueiredoPimenta2}, there exist $(u_n) \subset BV(\mathbb{R}^N)$ such that 
\begin{equation}
\lim_{n \to \infty}\Phi(u_n) = c
\label{MP1}
\end{equation}
and
\begin{equation}
\|w\|_1 - \|u_n\|_1 \geq \int_{\mathbb{R}^N}f(u_n)(w - u)dx - \tau_n \|w - u_n\|_1, \quad \forall w \in BV(\mathbb{R}^N),
\label{MP2}
\end{equation}
where $\tau_n \to 0$, as $n \to \infty$.

As in Lemma \ref{lemmabounded}, it is possible to prove that $(u_n)$ is a bounded sequence in $BV(\mathbb{R}^N)$. By the compactness of the embeddings of $BV(\mathbb{R}^N)$ in $L^q_{loc}(\mathbb{R}^N)$ for $1 \leq q < 1^*$, it follows that there exists $u_0 \in BV_{loc}(\mathbb{R}^N)$ such that 
$$
u_n \to u_0 \quad \mbox{in $L^q_{loc}(\mathbb{R}^N)$ for $1 \leq q < 1^*$}
$$
and
$$
u_n \to u_0 \quad \mbox{a.e. in $\mathbb{R}^N$,}
$$
as $n \to +\infty$. Note that as in the last section, it is possible to prove that $u_0 \in BV(\mathbb{R}^N)$. Moreover, as in (\ref{equ_n0}), 
\begin{equation}
|u_n|_1 \not \to 0, \quad \mbox{as $n \to +\infty$.}
\label{u_nnot0}
\end{equation}

As in the proof of Proposition \ref{propositionconvergence}, let us use the {\it Concentration of Compactness Principle} of Lions \cite{Lions} to the following bounded sequence in $L^1(\mathbb{R}^N)$,
$$
\rho_n(x):= \frac{|u_n(x)|}{|u_n|_1}.
$$
By such a principle, one and only one of the following statements hold:
\begin{description}
\item [{\it (Vanishing)}] $\displaystyle \lim_{n \to +\infty}\sup_{y \in \mathbb{R}^N} \int_{B_R(y)}\rho_n dx = 0$, $\forall R >0$;
\item [{\it (Compactness)}] There exist $(y_n) \subset \mathbb{R}^N$ such that for all $\eta > 0$, there exists $R > 0$ such that
\begin{equation}
\int_{B_R(y_n)}\rho_n dx \geq 1 - \eta, \quad \forall n \in \mathbb{N};
\label{compactness2}
\end{equation}
\item [{\it (Dichotomy)}] There exist $(y_n) \subset \mathbb{R}^N$, $\alpha \in (0,1)$, $R_1 > 0$, $R_n \to +\infty$ such that the functions $\displaystyle \rho_{n,1}(x) := \chi_{B_{R_1}(y_n)}(x)\rho_n(x)$ and $\displaystyle \rho_{n,2}(x) := \chi_{B_{R_n}^c(y_n)}(x)\rho_n(x)$ satisfy
\begin{equation}
\int_{\mathbb{R}^N}\rho_{1,n} dx \to \alpha \quad \mbox{and} \quad \int_{\mathbb{R}^N}\rho_{2,n} dx \to 1 - \alpha.
\label{dichotomy2}
\end{equation}
\end{description}
\end{itemize}

Note that {\it Vanishing} does not occur, otherwise, by \cite{FigueiredoPimenta2}, it would hold that $\rho_n \to 0$ in $L^q(\mathbb{R}^N)$, for all $1 \leq q < 1^*$. Taking (\ref{u_nnot0}) into account, this would imply that $u_n \to 0$ in $L^q(\mathbb{R}^N)$, for all $1 \leq q < 1^*$ and then this would led us to $c = 0$, a clear contradiction.

The case in which {\it Dichotomy} takes place, we get a contradiction in both situations, when $(y_n)$ is a bounded or an unbounded sequence, just repeating the arguments in the proof of Proposition \ref{propositionconvergence}.

Then it follows that {\it Compactness} holds and then, as in the proof of Claim \ref{claimconcentration} we can prove that $(y_n)$ is a bounded sequence. Then, for $\eta > 0$, let $R > 0$ such that (\ref{compactness2}) holds and note that this implies that 
$$
\int_{B_R^c(0)}\rho_n dx < \eta, \quad \forall n \in \mathbb{N},
$$
which is equivalent to
\begin{equation}
\int_{B_R^c(0)}|u_n| dx \leq \eta|u_n|_1 \leq C\eta, \quad \forall n \in \mathbb{N}.
\label{decay12}
\end{equation}
Since $u_0 \in L^1(\mathbb{R}^N)$, there exists $R_0 > 0$ such that
\begin{equation}
\int_{B_{R_0}^c(0)}|u_0| dx \leq \eta.
\label{decay22}
\end{equation}
Then, for $R_1 \geq \max\{R,R_0\}$, since $u_n \to u_0$ in $L^1(B_{R_1}(0))$, there exists $n_0 \in \mathbb{N}$ such that
\begin{equation}
\int_{B_{R_1}(0)}|u_n - u_0| dx \leq \eta.
\label{decay32}
\end{equation}
Then, by (\ref{decay12}), (\ref{decay22}) and (\ref{decay32}), it follows that if $n \geq n_0$,
$$
\int_{\mathbb{R}^N}|u_n - u_0| dx \leq \eta + \int_{B_{R_1}^c(0)}|u_n - u_0| dx \leq \eta + \int_{B_{R_1}^c(0)}|u_n| dx + \int_{B_{R_1}^c(0)}|u_0| dx \leq C_1\eta.
$$
Hence $u_n \to u_0$ in $L^1(\mathbb{R}^N)$ and since $(u_n)$ is bounded in $L^{1^*}(\mathbb{R}^N)$, by interpolation inequality it follows that
\begin{equation}
u_n \to u_0 \quad \mbox{in $L^q(\mathbb{R}^N)$, for all $1 \leq q < 1^*$.}
\label{convergenceun}
\end{equation}

From (\ref{convergenceun}), $(f_1)$ and $(f_2)$ it follows that
\begin{equation}
\int_{\mathbb{R}^N}f(u_n)u_ndx \to \int_{\mathbb{R}^N}f(u_0)u_0 dx, \quad \mbox{as $n \to +\infty$.}
\label{convergencefun}
\end{equation}

Then from (\ref{PSepsilon}), (\ref{convergencef}) and the lower semicontinuity of $\|\cdot \|_1$ w.r.t. the $L^1(\mathbb{R}^N)$ convergence imply that
\begin{equation}
\|w\|_1 - \|u_0\|_1 \geq \int_{\mathbb{R}^N}f(u_0)(w - u_0)dx, \quad \forall w \in BV(\mathbb{R}^N),
\label{solution1}
\end{equation}
and then $u_0$ is in fact a nontrivial solution of (\ref{Pintro}). Moreover, note that from (\ref{MP1})
\begin{eqnarray*}
c & \leq & \Phi (u_0)\\
& = & \Phi (vu_0) - \Phi'(u_0)u_0\\
& = & \int_{\mathbb{R}^N}\left(f(u_0)u_0 - F(u_0)\right)dx\\
& \leq & \liminf_{n \to \infty}\int_{\mathbb{R}^N}\left(f(u_n)u_n - F(u_n)\right)dx\\
& = & \Phi(u_n) + o_n(1)\\
& = & c.
\end{eqnarray*}
Then $u_0$ is a ground-state bounded variation solution of (\ref{Pintro}) and this finish the proof of Theorem \ref{theoremapplication2}.

\section{Existence of ground state solution for autonomous case}
\label{Appendix}

In this short section, let us prove that there exists a ground-state solution to the autonomous problem
\begin{equation}
\left\{
\begin{array}{rr}
- \displaystyle \Delta_1 v + V_\infty\frac{v}{|v|} & = f(v) \quad \mbox{in $\mathbb{R}^N$,}\\
& u \in BV(\mathbb{R}^N).
\end{array} \right.
\label{Pautonomo}
\end{equation}

Let $\Phi_\infty$, $c_\infty$ and $\mathcal{N}_\infty$ defined as in Section \ref{sectionexistence}. By \cite[Theorem 1.4]{FigueiredoPimenta2}, there exists $(w_n)\subset BV(\mathbb{R}^N)$ such that $\Phi_\infty(w_n) \to c_\infty$ and moreover, 
\begin{equation}
\|w_n\|_\infty - \|w_n\|_\infty \geq \int_{\mathbb{R}^N}f(w_n)(w - w_n)dx, \quad \forall w\in BV(\mathbb{R}^N).
\label{eqsolutioninfty}
\end{equation}

As in the proof of Lemma \ref{lemmabounded}, it is possible to prove that $(w_n)$ is a bounded sequence in $BV(\mathbb{R}^N)$ and then $w_n \to w_\infty$ in $L^q_{loc}(\mathbb{R}^N)$, where $1 \leq q < 1^*$. It follows as in (\ref{vBV}) that $w_\infty \in BV(\mathbb{R}^N)$. Also, there exist $R, \beta > 0$ and a sequence $(y_n) \subset \mathbb{R}^N$ such that
\begin{equation}
\liminf_{n \to +\infty}\int_{B_R(y_n)}|w_n| dx \geq \beta.
\label{Lionscinfty}
\end{equation}
In fact, otherwise, by \cite[Theorem 1.1]{FigueiredoPimenta2}, $w_n \to 0$ in $L^q(\mathbb{R}^N)$ for $1 < q < 1^*$ and then, by $(f_2)$ and $(f_3)$, $\Phi_\infty(w_n) \to 0$, leading to a clear contradiction with the fact that $c_\infty > 0$. Then it follows that $w_\infty \neq 0$.

By proceeding exactly as in the proof of Proposition \ref{propositionconvergence}, Claim 1, it follows that $\Phi_\infty'(w_\infty)w_\infty \leq 0$. Then there exists $t_\infty \in (0,1]$ such that $t_\infty w_\infty \in \mathcal{N}_\infty$. Note also that
\begin{equation}
c_\infty + o_n(1) = \Phi_\infty(w_n) +o_n(1) = \Phi_\infty(w_n) - \Phi'_\infty(w_n)w_n = \int_{\mathbb{R}^N}\left(f(w_n)w_n - F(w_n)\right) dx.
\label{convergenciafinfty}
\end{equation}

Then applying Fatou's Lemma in the last inequality together with $(f_4)$, it follows that
\begin{eqnarray*}
c_\infty & \geq &  \int_{\mathbb{R}^N}\left(f(w_\infty)w_\infty - F(w_\infty)\right) dx\\
& \geq & \int_{\mathbb{R}^N}\left(f(t_\infty w_\infty)t_\infty w_\infty - F(t_\infty w_\infty)\right) dx\\
& = & \Phi_\infty(t_\infty w_\infty) - \Phi'_\infty(t_\infty w_\infty)t_\infty w_\infty\\
& = & \Phi_\infty(t_\infty w_\infty)\\
& \geq & c_\infty.
\end{eqnarray*}

Hence, $t_\infty = 1$, $\Phi_\infty(w_\infty) = c_\infty$ and, by \cite[Theorem 5]{FigueiredoPimenta1}, it follows that $w_\infty$ is a ground-state bounded variation solution of (\ref{Pautonomo}).

\noindent \textbf{Acknowledgment} M.T.O. Pimenta has been supported by FAPESP and CNPq/Brazil 442520/2014-0. C.O. Alves was partially supported by CNPq/Brazil  304036/2013-7  and INCT-MAT.

\end{document}